\documentclass[a4paper]{amsart}
\oddsidemargin 0mm
\evensidemargin 0mm
\topmargin 10mm
\textwidth 160mm
\textheight 230mm
\tolerance=9999
\usepackage[latin1]{inputenc}
\usepackage{amssymb}
\usepackage{amsmath}
\usepackage{mathrsfs}
\usepackage{eufrak}
\usepackage{amsthm}
\usepackage{amsfonts}
\usepackage{textcomp}
\usepackage{graphicx}
\usepackage[pdftex]{color}
\usepackage{paralist}
\usepackage[shortlabels]{enumitem}
\usepackage{hyperref}
\usepackage{comment}
\usepackage[arrow, matrix, curve]{xy}

\newtheorem*{corollary*}{Corollary}
\newtheorem{theorem}{Theorem}[section]

\newtheorem{lemma}[theorem]{Lemma}
\newtheorem{proposition}[theorem]{Proposition}

\newtheorem*{claim*}{Claim}
\newtheorem*{conjecture}{Conjecture}

\theoremstyle{definition}
\newtheorem{definition}[theorem]{Definition}

\newtheorem{example}[theorem]{Example}

\theoremstyle{remark}

\numberwithin{equation}{theorem}

\makeatletter
\renewcommand*\env@matrix[1][\
arraystretch]{%
  \edef\arraystretch{#1}%
  \hskip -\arraycolsep
  \let\@ifnextchar\new@ifnextchar
  \array{*\c@MaxMatrixCols c}}
\makeatother


\begin{document}

\title{On a conjecture about dominant dimensions of algebras}
\date{\today}

\subjclass[2010]{Primary 16G10, 16E10}

\keywords{dominant dimension, representation theory of finite dimensional algebras, higher Auslander algebras}

\author{Ren\'{e} Marczinzik}
\address{Institute of algebra and number theory, University of Stuttgart, Pfaffenwaldring 57, 70569 Stuttgart, Germany}
\email{marczire@mathematik.uni-stuttgart.de}

\begin{abstract}
For every $n \geq 1$, we present examples of algebras $A$ having dominant dimension $n$, such that the algebra $B=End_A(I_0 \oplus \Omega^{-n}(A))$ has dominant dimension different from $n$, where $I_0$ is the injective hull of $A$. This gives a counterexample to conjecture 2 in \cite{CX}.
While the conjecture is false in general, we show that a large class of algebras containing higher Auslander algebras satisfies the property in the conjecture.
\end{abstract}

\maketitle
\section*{Introduction}
In this short article we provide a counterexample to the following conjecture stated in \cite{CX} as conjecture 2:
\begin{conjecture}
Let $A$ be a finite dimensional algebra with finite dominant dimension $n \geq 1$. 
Let $0 \rightarrow A \rightarrow I_0 \rightarrow I_1 \rightarrow ...$ be a minimal injective resolution of $A$, then $B:=End_A(I_0 \oplus \Omega^{-n}(A))$ has dominant dimension $n$.
\end{conjecture}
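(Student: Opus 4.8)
The plan is to read off $\operatorname{domdim}(B)$ from the minimal injective coresolution of $B=\End_A(M)$ with $M=I_0\oplus\Omega^{-n}(A)$, using the functor $F=\Hom_A(M,-)$ to transport homological data from $\operatorname{mod}A$ to $\operatorname{mod}B$. The first thing I would record is a clean structural fact: since $\operatorname{domdim}(A)=n\ge 1$, every projective-injective $A$-module is a direct summand of the injective hull $I_0$ of $A$ (its socle already occurs in $\soc(A)$), so that $\add(I_0)$ is exactly the class of projective-injective $A$-modules; in particular $I_1,\dots,I_{n-1}$ (which are projective because $\operatorname{domdim}(A)=n$) all lie in $\add(I_0)\subseteq\add(M)$. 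Consequently $\Hom_A(M,I_j)$ is projective over $B$ for $0\le j\le n-1$, and $\add(\Hom_A(M,I_0))$ should be the class of projective-injective $B$-modules, so that the only summand of $B=\Hom_A(M,I_0)\oplus\Hom_A(M,\Omega^{-n}(A))$ that needs coresolving is $T:=\Hom_A(M,\Omega^{-n}(A))$.

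For the two bounds I would proceed as follows. For $\operatorname{domdim}(B)\ge n$, I would build the first $n$ terms of a minimal injective coresolution of $T$ out of projective-injective modules, using the exact sequence
\[
0\to A\to I_0\to I_1\to\cdots\to I_{n-1}\to\Omega^{-n}(A)\to 0
\]
obtained by truncating the injective coresolution of $A$, together with the tail $0\to\Omega^{-n}(A)\to I_n\to I_{n+1}\to\cdots$ whose first term $I_n$ is, by definition of $\operatorname{domdim}(A)=n$, injective but not projective. Applying $F$ and splicing, the obstruction to exactness is measured by the groups $\Ext^i_A(M,M)$ between the summands $I_0$ and $\Omega^{-n}(A)$, and I would check that these vanish in the range $1\le i\le n-1$, so that the transported complex really is an injective coresolution with projective-injective terms up to step $n-1$. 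For $\operatorname{domdim}(B)\le n$ I would identify the $n$-th cosyzygy of $T$ with (the $F$-image of) the embedding $\Omega^{-n}(A)\hookrightarrow I_n$ and show that the corresponding injective $B$-module is not projective, the point being that $I_n\notin\add(M)$ precisely because $\operatorname{domdim}(A)=n$ and not larger.

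The hard part --- and the step I expect actually to fail in general --- is the lower bound, more precisely the claim that the minimal injective coresolution of $T$ over $B$ agrees with the $F$-transported complex for $n$ steps. Since $M=I_0\oplus\Omega^{-n}(A)$ is in general neither a generator nor a cogenerator of $\operatorname{mod}A$, the classical Morita--Tachikawa and M\"uller dictionaries (which compute $\operatorname{domdim}(\End_A M)$ from $\Ext^\ast_A(M,M)$ under a generator-cogenerator hypothesis) do not apply, and the functor $F$ need neither preserve injective hulls nor send the injective $I_j$ to injective $B$-modules automatically. The whole argument therefore rests on the vanishing of the mixed extension groups $\Ext^i_A(I_0,\Omega^{-n}(A))$ and $\Ext^i_A(\Omega^{-n}(A),\Omega^{-n}(A))$ in the appropriate range; I would expect these to hold for higher Auslander algebras and for the ``large class'' isolated in the positive part of the paper, but to be exactly the conditions that can be violated, in which case the $n$-th cosyzygy is reached too early or too late and $\operatorname{domdim}(B)\neq n$. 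Pinning down the precise extra hypothesis that forces these vanishings is where I would concentrate the effort.
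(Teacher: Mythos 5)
You cannot prove this statement, because it is false: it is Conjecture~2 of \cite{CX}, and the entire point of the paper is to refute it. For every $n\geq 1$ the paper exhibits a counterexample, namely the connected Nakayama algebras $A_N$ with Kupisch series $[3,3,\dots,3,2,1]$ and $N\not\equiv 0 \pmod 3$: the paper computes $\operatorname{domdim}(A_{3m+1})=2m-1$ and $\operatorname{domdim}(A_{3m+2})=2m$, while the associated algebras $B_N=\End_{A_N}(eA_N\oplus \Omega^{-\operatorname{domdim}(A_N)}(A_N))$ have dominant dimension $0$ and $1$ respectively, so every value $n\geq 1$ is covered. To your credit, you explicitly flagged the lower bound as the step you expect to fail in general, and your structural diagnosis --- that $M=I_0\oplus\Omega^{-n}(A)$ is neither a generator nor a cogenerator, so $F=\Hom_A(M,-)$ need not transport injective coresolutions --- is exactly the right instinct. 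But the proposal ends up neither proving nor disproving anything; had you pushed the suspicion to a small explicit example (the smallest counterexample is the algebra with Kupisch series $[3,3,2,1]$, of dimension $9$, with $\operatorname{domdim}(A)=1$ and $\operatorname{domdim}(B)=0$), you would have settled the question in the negative, which is what the paper does.

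There is also a concrete mathematical error in where you place the obstruction. You claim the argument ``rests on the vanishing of the mixed extension groups $\Ext^i_A(I_0,\Omega^{-n}(A))$ and $\Ext^i_A(\Omega^{-n}(A),\Omega^{-n}(A))$'' and that these are ``exactly the conditions that can be violated.'' They are not: these groups vanish for \emph{every} algebra with $\operatorname{domdim}(A)=n\geq 1$, since $eA\oplus\Omega^{-i}(A)$ is always a tilting module for $1\leq i\leq n$ --- the paper proves this by dimension shifting, $\Ext^k(T,T)\cong\Ext^{n+k}(\Omega^{-n}(A),A)=0$, using that the truncated coresolution $0\to A\to I_0\to\cdots\to I_{n-1}\to\Omega^{-n}(A)\to 0$ is simultaneously a projective resolution of $\Omega^{-n}(A)$. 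So the $F$-transported complex is always exact; what actually fails is the \emph{injectivity over $B$} of its terms. The module $\Hom_A(M,I_0)$ is projective over $B$ because $I_0\in\add(M)$, but the standard argument for its injectivity runs through $\Hom_A(M,D(A))\cong D(M)$ and the projectivity of $M$ as a left $B$-module, which a generator hypothesis would guarantee and which fails here; in the counterexample $B_{3m+1}$ already the injective hull of $B$ is not projective, i.e.\ $\operatorname{domdim}(B)=0$. Note finally that the positive result you correctly anticipate for higher Auslander algebras is not obtained in the paper via Ext-vanishing either: for algebras with dominant dimension equal to Gorenstein dimension $n\geq 2$, the functor $\Omega^{-n}$ identifies the non-injective indecomposable projectives with the non-projective indecomposable injectives, whence $\add(eA\oplus\Omega^{-n}(A))=\add(D(A))$ and $B$ is Morita equivalent to $\End_A(D(A))\cong A$ itself --- a far stronger conclusion than any Ext condition alone could deliver.
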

The motivation of this conjecture is due to the fact that Morita algebras (as defined in \cite{KerYam}) satisfy the conjecture, see \cite{CX} Corollary 4.19.
We show that this conjecture is false for every $n \geq 1$ by giving for any $n \geq 1$ an example of an algebra with dominant dimension $n$ such that $B$ as in the conjecture has dominant dimension strictly smaller than $n$. We say that algebras $A$ satisfying the conjecture have property *. While the conjecture is false in general, it might still be an interesting question to find another characterisation of algebras having property * and new classes of algebras having property *. We show that a large class of algebras containing higher Auslander algebras (as defined in \cite{Iya}) have property *. We also give a way to produce new higher Auslander algebras from old by using tilting modules related to dominant dimensions.
I thank Steffen Koenig for useful comments.
\section{Preliminaries}
In this article we always assume that $A$ is a finite dimensional, basic and connected algebra over a field $K$. To avoid trivialities, we assume that $A$ is not semisimple. Note that the assumption that our algebras are basic is not really restrictive since homological dimensions are invariant under Morita equivalence. We always work with finite dimensional right modules, if not stated otherwise. $mod-A$ denotes the category of finite dimensional right $A$-modules.
$D:=Hom_K(-,K)$ denotes the $K$-duality of an algebra $A$ over the field $K$.
For background on representation theory of finite dimensional algebras and their homological algebra, we refer to \cite{ASS}.
For a module $M$, $add(M)$ denotes the full subcategory of $mod-A$ consisting of direct summands of $M^n$ for some $n \geq 1$.
A module $M$ is called basic in case $M \cong M_1 \oplus M_2 \oplus ... \oplus M_n$, where every $M_i$ is indecomposable and $M_i$ is not isomorphic to $M_j$ for $i \neq j$. The basic version of a module $N$ is the unique (up to isomorphim) module $M$ such that $add(M)=add(N)$ and such that $M$ is basic.
We denote by $S_i=e_iA/e_iJ$, $P_i=e_i A$ and $I_i=D(Ae_i)$  the simple, indecomposable projective and indecomposable injective module, respectively, corresponding to the primitive idempotent $e_i$. \newline
The dominant dimension domdim($M$) of a module $M$ with a minimal injective resolution $(I_i): 0 \rightarrow M \rightarrow I_0 \rightarrow I_1 \rightarrow ...$ is defined as: \newline
domdim($M$):=$\sup \{ n | I_i $ is projective for $i=0,1,...,n \}$+1, if $I_0$ is projective, and \newline domdim($M$):=0, if $I_0$ is not projective. \newline
The codominant dimension of a module $M$ is defined as the dominant dimension of the $A^{op}$-module $D(M)$.
The dominant dimension of a finite dimensional algebra is defined as the dominant dimension of the regular module. It can be shown that the dominant dimension of an algebra always equals the dominant dimension of the opposite algebra, see for example \cite{Ta}.
So domdim($A$)$ \geq 1$ means that the injective hull of the regular module $A$ is projective or equivalently, that there exists an idempotent $e$ such that $eA$ is a minimal faithful projective-injective module.
Unless otherwise stated, $e$ without an index will always denote the idempotent such that $eA$ is the minimal faithful injective-projective $A$-module in case $A$ has dominant dimension at least one.
Algebras with dominant dimension larger than or equal to 1 are called QF-3 algebras.
All Nakayama algebras are QF-3 algebras (see \cite{Abr}, Proposition 4.2.2 and Propositon 4.3.3).
For more information on dominant dimensions and QF-3 algebras, we refer to \cite{Ta}.
\begin{definition}
$A$ is called a Morita algebra iff it has dominant dimension larger than or equal to 2 and $D(Ae) \cong eA$ as $A$-right modules. This is equivalent to $A$ being isomorphic to $End_B(M)$, where $B$ is a selfinjective algebra and $M$ a generator of mod-$B$ and in this case $B=eAe$ and $M=D(eA)$ (see \cite{KerYam}).
$A$ is called a gendo-symmetric algebra iff it has dominant dimension larger than or equal to 2 and $D(Ae) \cong eA$ as $(eAe,A)-$bimodules iff it has dominant dimension larger than or equal to 2 and $D(eA) \cong Ae$ as $(A,eAe)$-bimodules. This is equivalent to $A$ being isomorphic to $End_B(M)$, where $B$ is a symmetric algebra and $M$ a generator of mod-$B$ and in this case $B=eAe$ and $M=Ae$ (see \cite{FanKoe}).
\end{definition}
An algebra is called Gorenstein in case $injdim(A)=projdim(D(A))< \infty$. In this case $Gdim(A)$ is called the Gorenstein dimension of $A$ and we say that $A$ has infinite Gorenstein dimension if $injdim(A)= \infty$. Note that $Gdim(A)= max \{ injdim(e_iA) | e_i$ a primitive idempotent$ \}$ and $domdim(A)= min \{ domdim(e_iA) | e_i $ a primitive idempotent $ \}$.
An algebra $A$ is called higher Auslander algebra in case $\infty>domdim(A)=gldim(A) \geq 2$, see \cite{Iya}.
We now recall some results on Nakayama algebras. See chapter 32 in \cite{AnFul} or chapter 5 in \cite{ASS} for more on this topic.
Let $A$ always be a finite dimensional connected Nakayama algebra given by quiver and relations for the rest of this section. Thus their quiver is a directed line or a directed circle. We choose to number the points in the quiver by $0,1,...,n-1$ in a clockwise way in case the algebra has $n$ simple modules.
In this case, the algebra is uniquely determined by the sequence $c=(c_0,c_1,...,c_{n-1})$ (see \cite{AnFul}, Theorem 32.9.), where $c_i$ denotes the dimension of the indecomposable projective module $e_iA$ and $n$ is the number of simple modules. The sequence $(c_0,c_1,...,c_{n-1})$ is called the Kupisch series of $A$. We look at the indices of the $c_i$ always modulo $n$. Thus $c_i$ is defined for every $i \in \mathbb{Z}$. For a given Nakayama algebra with $n$ simple modules, $d=(d_0,d_1,...,d_{n-1})$ denotes the CoKupisch series, where $d_i:=dim(D(Ae_i))$ is the dimension of a indecomposable injective $A$-module.
Every indecomposable module over a Nakayama algebra is isomorphic to $e_iA/e_iJ^k$ for some $k \in \{1,2,...,c_i\}$. For $k=c_i$, one gets exactly the indecomposable projective modules.

\begin{lemma}
\label{Lengthinj}
The dimension of the indecomposable projective left module $Ae_i$ at a vertex $i$ (and, therefore, the length of the indecomposable injective right module at $i$) satisfies:
$$d_i=\inf \{k \geq 1 | k \geq c_{i-k} \}.$$
\noindent Furthermore, the values $c_i$ are a permutation of the values of the $d_j$.
\end{lemma}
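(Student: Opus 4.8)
The plan is to interpret both sides module-theoretically and then reduce the equality to a purely combinatorial statement about the Kupisch series. Since $I_i=D(Ae_i)$ is the indecomposable injective right module with socle $S_i$, and $\dim_K I_i=\dim_K Ae_i=d_i$, the number $d_i$ is exactly the composition length of the injective envelope of $S_i$; equivalently, $d_i$ is the maximal length of an indecomposable (hence uniserial) module whose socle is $S_i$. So the first step is to enumerate the uniserial modules with socle $S_i$. Each indecomposable is $e_jA/e_jJ^m$, a uniserial module of length $m$ with top $S_j$; with the chosen clockwise numbering the radical layers of $e_jA$ are $S_j,S_{j+1},\ldots,S_{j+m-1}$ from top to socle, so its socle is $S_{j+m-1}$. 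Hence a uniserial module of length $m$ with socle $S_i$ must have top $S_{i-m+1}$, i.e.\ be $e_{i-m+1}A/e_{i-m+1}J^m$, and such a quotient of $P_{i-m+1}$ exists precisely when $c_{i-m+1}\ge m$. Therefore $d_i=\max\{m\ge 1 : c_{i-m+1}\ge m\}$, the maximum being over a nonempty set because $m=1$ always qualifies ($c_i\ge 1$).

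The key technical point is that the set $\{m\ge 1 : c_{i-m+1}\ge m\}$ has no gaps: it is the initial segment $\{1,\ldots,d_i\}$. This follows from the admissibility of the Kupisch series, which for a connected Nakayama algebra reads $c_{p+1}\ge c_p-1$ for every $p$ (for a line the wrap-around instance is vacuous since $c_{n-1}=1$). Indeed, if $c_{i-m+1}\ge m$ with $m\ge 2$, then applying admissibility at $p=i-m+1$ gives $c_{i-m+2}\ge c_{i-m+1}-1\ge m-1$, so $m-1$ also qualifies. Granting this, the maximum translates into the stated infimum: writing $M:=d_i$, maximality gives $c_{i-M}\le M$, so $k=M$ satisfies $k\ge c_{i-k}$; and for every $k<M$ the initial-segment property applied to $m=k+1\le M$ gives $c_{i-k}\ge k+1>k$, so no smaller $k$ works. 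Hence $d_i=\inf\{k\ge 1 : k\ge c_{i-k}\}$, as claimed.

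For the final assertion I would avoid constructing an explicit bijection and instead compare the two multisets through their tail-counts. For each threshold $t\ge 1$, the initial-segment description gives $d_i\ge t$ iff $c_{i-t+1}\ge t$, so
$$\#\{i : d_i\ge t\}=\#\{i : c_{i-t+1}\ge t\}=\#\{j : c_j\ge t\},$$
the last equality because $i\mapsto i-t+1$ is a bijection of $\mathbb{Z}/n$. Since the numbers $\#\{\,\cdot\ge t\,\}$ over all $t\ge 1$ determine a finite multiset, the multiset $\{d_0,\ldots,d_{n-1}\}$ coincides with $\{c_0,\ldots,c_{n-1}\}$, i.e.\ the $c_i$ are a permutation of the $d_j$.

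The main obstacle throughout is the bookkeeping of the orientation and of the line-versus-circle boundary. One must verify that the ``index increasing downwards'' convention is the correct one for the given clockwise numbering, and that the uniform reading of the indices modulo $n$ is legitimate in the linear case. There the equality $c_{n-1}=1$ plays a double role: it trivialises the wrap-around instance of admissibility, and it prevents any uniserial module from crossing the cut, so that the combinatorial formula and the cyclic-shift argument remain valid verbatim without a separate treatment of the two quiver shapes.
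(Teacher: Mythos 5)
Your proof is correct, but your route is genuinely different from the paper's: the paper offers no argument at all for this lemma and simply cites \cite{Ful}, Theorem 2.2. Your argument is self-contained and rests only on facts the paper already quotes in its preliminaries: every indecomposable module is $e_jA/e_jJ^m$, uniserial with socle $S_{j+m-1}$ (the same convention the paper uses for its cosyzygy algorithm), so $d_i=\max\{m\geq 1 \mid c_{i-m+1}\geq m\}$; the admissibility relation $c_{p+1}\geq c_p-1$ makes this set an initial segment, which is exactly what converts the maximum into the stated infimum; and the tail-count identity $\#\{i \mid d_i\geq t\}=\#\{j \mid c_j\geq t\}$, obtained from the cyclic shift $i\mapsto i-t+1$, yields the permutation statement without constructing an explicit bijection. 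Two compressed steps should be spelled out if this were to replace the citation: boundedness of the set defining the maximum (any $m>\max_j c_j$ fails the inequality — you only note nonemptiness), and the assertion that $c_{n-1}=1$ legitimises reading indices modulo $n$ in the linear case; the clean verification of the latter is that if some wrapped value $m\geq i+2$ lay in the set, the initial-segment property would force $i+2$ into it as well, giving $c_{n-1}\geq i+2\geq 2$, a contradiction, so the modular formula never sees a module crossing the cut. What your proof buys is independence from the reference, using only the conventions fixed in this paper; what the citation buys is brevity and Fuller's greater generality, since \cite{Ful} treats generalized uniserial rings rather than split basic algebras given by quiver and relations.
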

\begin{proof}
See \cite{Ful} Theorem 2.2. 
\end{proof}

\begin{lemma}
Let $M:= e_iA/e_iJ^m$ be an indecomposable module of the Nakayama algebra $A$ with $m=dim(M) \leq c_i$.
Then $M$ is injective iff $c_{i-1} \leq m$. Especially: $e_iA$ is injective iff $c_{i-1} \leq c_i$.
\end{lemma}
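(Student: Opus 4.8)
The plan is to reduce everything to the uniserial structure of modules over the Nakayama algebra $A$ together with Lemma \ref{Lengthinj}. Since $A$ is Nakayama, the indecomposable module $M = e_iA/e_iJ^m$ is uniserial of length $m$, with top $S_i$ and radical layers $S_i, S_{i+1}, \ldots, S_{i+m-1}$ read along the arrows $i \to i+1$; in particular its socle is the simple module $S_{i+m-1}$, where indices are taken modulo $n$. The injective hull of $M$ is therefore the indecomposable injective $I_{i+m-1} = D(Ae_{i+m-1})$, which is again uniserial, with socle $S_{i+m-1}$ and length $d_{i+m-1}$ by the definition of the CoKupisch series. Because $M$ embeds into $I_{i+m-1}$, we always have $m \leq d_{i+m-1}$; and, since two uniserial modules with the same socle are comparable, $M$ is injective precisely when these lengths agree, that is, when $m = d_{i+m-1}$.

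Next I would feed this into Lemma \ref{Lengthinj}. Writing $j = i+m-1$, that lemma gives $d_j = \inf\{ k \geq 1 \mid k \geq c_{j-k}\}$, an infimum over a nonempty set of positive integers, hence a minimum that is actually attained; in particular $d_j \geq c_{j-d_j}$. Evaluating the defining inequality at $k = m$ produces the condition $m \geq c_{j-m} = c_{i-1}$, because $j - m = i-1$. It then remains to assemble the equivalence $m = d_j \iff c_{i-1} \leq m$. For the forward direction, if $m = d_j$ then the attained minimum satisfies $d_j \geq c_{j-d_j} = c_{j-m} = c_{i-1}$, so $m \geq c_{i-1}$. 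Conversely, if $c_{i-1} \leq m$, then $m$ lies in the set $\{ k \geq 1 \mid k \geq c_{j-k}\}$, so $d_j \leq m$; combined with the inequality $m \leq d_j$ coming from the injective-hull embedding, this forces $m = d_j$. Thus $M$ is injective if and only if $c_{i-1} \leq m$, and the ``especially'' clause is just the special case $m = c_i$ (so that $M = e_iA$), which reads $e_iA$ injective iff $c_{i-1} \leq c_i$.

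The argument is in essence bookkeeping, so the main obstacle is not depth but precision in two places. First, one must correctly pin down the socle of $M$, hence the index shift $j - m = i - 1$ inside Lemma \ref{Lengthinj}; this depends on the chosen clockwise orientation of the quiver and on working with right modules, and an off-by-one or wrong-direction error here would replace $c_{i-1}$ by the wrong entry of the Kupisch series. Second, one must justify that the infimum in Lemma \ref{Lengthinj} is attained, so that $d_j$ itself satisfies the defining inequality $d_j \geq c_{j-d_j}$ used in the forward direction; this is immediate once one observes that it is a minimum of a nonempty set of positive integers, but it is the one spot where the equivalence could silently fail if overlooked.
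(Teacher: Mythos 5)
Your argument is correct, and it is genuinely different from what the paper does: the paper's entire ``proof'' of this lemma is a citation to Theorem 32.6 of \cite{AnFul}, whereas you derive the statement from Lemma \ref{Lengthinj} (which the paper in turn only quotes from \cite{Ful}). Your chain of reductions is sound: $\soc(e_iA/e_iJ^m)=S_{i+m-1}$ agrees with the convention the paper itself uses in the preliminaries when computing cosyzygies, so the injective hull is $D(Ae_{i+m-1})$, uniserial of length $d_{i+m-1}$; injectivity of $M$ is then equivalent to $m=d_{i+m-1}$ because the hull is indecomposable (or because a proper essential submodule cannot be injective); and the equivalence $m=d_{i+m-1}\iff c_{i-1}\leq m$ follows from the min-formula for $d_{i+m-1}$, with the index shift $(i+m-1)-m=i-1$ handled correctly and with the attainment of the infimum justified (the set is nonempty, e.g.\ any $k\geq\max_j c_j$ lies in it, so the infimum is a minimum and $d_j\geq c_{j-d_j}$). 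The trade-off between the two approaches: the paper's citation is shorter and delegates all orientation and index conventions to \cite{AnFul}, which carries the risk that the cited source's conventions differ from the paper's; your proof keeps the statement self-contained modulo Lemma \ref{Lengthinj} and verifies the $c_{i-1}$ index against the paper's own socle computation, which is exactly the place where an off-by-one error would otherwise creep in unnoticed.
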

\begin{proof}
See \cite{AnFul} Theorem 32.6. 
\end{proof}

Now we give a fast method to calculate minimal injective resolutions in Nakayama algebras. Let $M:=e_iA/e_iJ^k$ be an indecomposable $A$-module.
We get a minimal injective resolution of $M$ as follows:
We have soc($M$)=$e_iJ^{k-1}/e_iJ^k \cong S_{i+k-1}$ (the simple module corresponding to the point $i+k-1$). Therefore, the injective hull of $M$ is $D(Ae_{i+k-1})$ and $\Omega^{-1}(M)=D(J^k e_{i+k-1})$ by looking at dimensions and using that submodules form a chain. 
Now the socle of $D(J^k e_{i+k-1})$ equals the top of $J^k e_{i+k-1}$, which is $S_{i-1}$. Therefore, the injective hull of $D(J^k e_{i+k-1})$ is $D(Ae_{i-1})$ and $\Omega^{-1}(D(J^k e_{i+k-1}))=D(J^{d_{i+k-1}-k} e_{i-1})$ again by looking at dimensions and using that submodules form a chain. 
If we denote $D(J^y e_x)$ for short by $[x,y] \in \mathbb{Z}/n \times \mathbb{N}$ then we get that $\Omega^{-1}(D(J^y e_x))=\Omega^{-1}([x,y])=[x-y,d_x-y]$. Like this we can calculate the cosyzygies and the minimal inejctive resolution of an indecomposable module over a Nakayama algebra successively. Note that the cosyzygies determine the minimal injective resolution completely.
We will need the following two lemmas later:
\begin{lemma}
\label{umrechnung}
$D(J^k e_i) \cong e_{i-d_i+1}A/e_{i-d_i+1}J^{d_i-k}$.
\end{lemma}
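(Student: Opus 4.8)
The plan is to identify both sides of the claimed isomorphism by computing their length and their top, and then to invoke the fact that an indecomposable module over a Nakayama algebra is determined up to isomorphism by these two invariants. Since $D$ is a contravariant duality exchanging left and right modules, the heart of the matter is to understand the uniserial structure of the indecomposable projective left module $Ae_i$ and then dualise.

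First I would record the composition series of $Ae_i$. By definition $\dim(Ae_i)=d_i$, and $Ae_i$ is uniserial with top $S_i$; with the chosen clockwise numbering its radical layers from the top carry the simples $S_i, S_{i-1},\dots,S_{i-d_i+1}$. This is exactly the orientation convention already used implicitly in the cosyzygy computation above, where one reads off $\top(J^k e_{i+k-1})=S_{(i+k-1)-k}=S_{i-1}$. In particular $\soc(Ae_i)=S_{i-d_i+1}$. Next, since the submodules of the uniserial module $Ae_i$ form a chain, $J^k e_i$ is precisely the unique submodule of $Ae_i$ of dimension $d_i-k$ for $0\leq k\leq d_i$; being a nonzero submodule it shares the socle $\soc(J^k e_i)=S_{i-d_i+1}$, while its top is the $k$-th radical layer $S_{i-k}$.

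Then I would apply the duality $D$, which sends left modules to right modules and interchanges top and socle, $\top(D(M))=D(\soc(M))$ and $\soc(D(M))=D(\top(M))$, with $D$ preserving the index of a simple. Hence $D(J^k e_i)$ is the indecomposable right module of dimension $d_i-k$ whose top is $S_{i-d_i+1}$. The unique such module is $e_{i-d_i+1}A/e_{i-d_i+1}J^{d_i-k}$: its top is $S_{i-d_i+1}$ and its length is $d_i-k$, and it is well defined because $d_i-k\leq d_i\leq c_{i-d_i+1}$, the last inequality holding since $D(Ae_i)=I_i$ has top $S_{i-d_i+1}$ and is therefore a quotient of $e_{i-d_i+1}A$. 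Matching top and length yields the claimed isomorphism.

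The only genuine obstacle is the bookkeeping of indices modulo $n$ together with fixing the quiver orientation consistently between the left module $Ae_i$ and the right modules $e_jA/e_jJ^m$; once the radical layers of $Ae_i$ are correctly identified as decreasing in index, everything else is formal from the behaviour of $D$ on socle and top and from the classification of indecomposables over a Nakayama algebra.
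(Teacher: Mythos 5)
Your proposal is correct and follows essentially the same route as the paper's proof: identify $\top(D(J^k e_i)) \cong D(\soc(J^k e_i)) = S_{i-d_i+1}$ via the duality exchanging top and socle, and then pin down the module by its dimension $d_i-k$. The only cosmetic difference is that the paper packages the final identification as a short exact sequence $0 \rightarrow e_{i-d_i+1}J^s \rightarrow e_{i-d_i+1}A \rightarrow D(J^ke_i) \rightarrow 0$ with $s$ determined by a dimension count, whereas you invoke the equivalent fact that an indecomposable over a Nakayama algebra is determined by its top and length.
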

\begin{proof}
Since $J^k e_i$ is indecomposable, $D(J^k e_i)$ is also indecomposable. We first calculate the top of the module:
$top(D(J^ke_i)) \cong D(soc(J^ke_i)) \cong D(J^{d_i-1}e_i)=S_{i-d_i+1}$. Thus there exists a short exact sequence: \newline
$0 \rightarrow e_{i-d_i+1}J^s \rightarrow e_{i-d_i+1}A \rightarrow D(J^ke_i) \rightarrow 0$, for some $s$. Comparing dimensions, one obtains $s=d_i-k$ and the isomorphism follows.
\end{proof}
\begin{lemma}
\label{homspace}
$Hom_A(e_iA/e_iJ^k,e_jA/e_jJ^l) \cong (e_jJ^{max(0,l-k)}/e_jJ^l)e_i$, where we interpret $J^0=A$.
\end{lemma}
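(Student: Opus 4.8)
The plan is to reduce the computation to the Yoneda-type isomorphism for the projective cover of the source and then to cut out the homomorphisms that descend to the quotient. Writing $N := e_jA/e_jJ^l$, I would start from the natural isomorphism
$$\Hom_A(e_iA,N)\ \xrightarrow{\ \sim\ }\ Ne_i,\qquad f\mapsto f(e_i),$$
valid for every right module $N$, whose inverse sends $n$ to the map $e_ix\mapsto nx$. Since $f(e_ix)=f(e_i)x$ for all $x\in A$, a homomorphism $f\colon e_iA\to N$ vanishes on the submodule $e_iJ^k$ exactly when $f(e_i)J^k=0$. Applying $\Hom_A(-,N)$ to the short exact sequence $0\to e_iJ^k\to e_iA\to e_iA/e_iJ^k\to 0$ and using the universal property of the quotient (equivalently, left exactness of $\Hom_A(-,N)$), I would therefore identify
$$\Hom_A(e_iA/e_iJ^k,N)\ \cong\ \{\,n\in Ne_i : nJ^k=0\,\}.$$

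The second step is to recognize the right-hand set intrinsically. The elements of $N$ annihilated by $J^k$ form the $k$-th socle $\soc_k(N)$, that is, the largest submodule of Loewy length at most $k$. Because $N=e_jA/e_jJ^l$ is uniserial --- as every module over a Nakayama algebra is --- its submodules form the chain $e_jJ^t/e_jJ^l$ for $t=0,1,\dots,l$, and $e_jJ^t/e_jJ^l$ has Loewy length $l-t$. The largest submodule of Loewy length at most $k$ is thus attained at the smallest admissible index $t=\max(0,l-k)$, giving $\soc_k(N)=e_jJ^{\max(0,l-k)}/e_jJ^l$, where the convention $J^0=A$ precisely covers the case $l\le k$ in which all of $N$ is annihilated by $J^k$.

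Finally, I would intersect with $Ne_i$. Since $\soc_k(N)$ is a submodule it is stable under right multiplication by $e_i$, and a short check (every $n\in Ne_i$ satisfies $n=ne_i$) yields $\{n\in Ne_i : nJ^k=0\}=(\soc_k N)\,e_i=(e_jJ^{\max(0,l-k)}/e_jJ^l)e_i$, which is the asserted formula. The only genuinely delicate point is the socle identification in the second step --- pinning down that the annihilator of $J^k$ is exactly $e_jJ^{\max(0,l-k)}/e_jJ^l$ and that the $\max(0,l-k)$ appears correctly; the rest is the formal Yoneda computation together with the elementary chain structure of uniserial submodules. I would finish by verifying the boundary regimes $k\ge l$ (where the Hom-space must equal all of $Ne_i$) and $k<l$ directly against the formula, to confirm that the convention $J^0=A$ has been applied consistently.
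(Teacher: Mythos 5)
Your proof is correct and takes essentially the same route as the paper: both identify $\Hom_A(e_iA,\,e_jA/e_jJ^l)$ with $(e_jA/e_jJ^l)e_i$ via the Yoneda-type isomorphism (your evaluation map $f\mapsto f(e_i)$ is the inverse of the paper's left-multiplication map $z\mapsto l_z$) and then cut out the elements annihilated by $J^k$, which you pin down as $(e_jJ^{\max(0,l-k)}/e_jJ^l)e_i$ --- the paper simply asserts this last equivalence, whereas you justify it through the chain of submodules of the uniserial module $e_jA/e_jJ^l$. One harmless slip: not every module over a Nakayama algebra is uniserial (only every indecomposable one), but this does not affect your argument since $e_jA/e_jJ^l$, being a quotient of the uniserial module $e_jA$, is indeed uniserial.
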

\begin{proof}
Recall that an isomorphism $(e_jA/e_jJ^l)e_i \cong Hom_A(e_iA,e_jA/e_jJ^l) $ is given by $z \rightarrow l_z$, where $l_z$ denotes left multiplication by the element $z$. Now to be in \newline\noindent $Hom_A(e_iA/e_iJ^k,e_jA/e_jJ^l)$ corresponds to the condition $l_z(e_iJ^k)=0$, which translates into $ze_iJ^k=0$. Since $z$ is in $(e_jA/e_jJ^l)e_i$, $ze_iJ^k=0$ is equivalent to $z \in (e_jJ^{max(0,l-k)}/e_jJ^l)e_i$. 
\end{proof}
Note that in case $l=c_j$, the formula simplifies to $e_jJ^{max(0,l-k)}e_i$.

In the rest of this article we will mostly deal with Nakayama algebras with $n$ simple modules and $c_{n-1}=1$. Thus their quiver looks as follows:
$$Q=\begin{xymatrix}{ \circ^{0} \ar[r]^{\alpha_0} & \circ^{1} \ar[r]^{\alpha_1}  & \circ^{2} \ar @{} [r] |{\cdots} & \circ^{n-2} \ar[r]^{\alpha_{n-2}}  & \circ^{n-1}}\end{xymatrix}.$$
\section{Counterexample to a conjecture concerning dominant dimensions}
\begin{definition}
Let $A$ be a finite dimensional algebra with finite dominant dimension $n \geq 1$. 
We say $A$ has \textit{property *}, iff the following is satisfied:
Let $0 \rightarrow A \rightarrow I_0 \rightarrow I_1 \rightarrow ...$ be a minimal injective resolution of $A$, then $B:=End_A(I_0 \oplus \Omega^{-n}(A))$ has dominant dimension $n$.
\end{definition}
In \cite{CX}, the authors have shown that all Morita algebras have property *.
In \cite{CX} section 5.2., one can find the following as conjecture 2:
\begin{conjecture}
Every finite dimensional algebra with finite dominant dimension $n \geq 1$ has property *.
\end{conjecture}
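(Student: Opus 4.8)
The conjecture as stated turns out to be false, and the heavy combinatorial apparatus just assembled for Nakayama algebras (the CoKupisch formula of Lemma \ref{Lengthinj}, the cosyzygy recursion $\Omega^{-1}([x,y])=[x-y,d_x-y]$, and the Hom-formula of Lemma \ref{homspace}) strongly signals that the cheapest counterexamples live among connected Nakayama algebras with $c_{n-1}=1$. My plan is therefore to \emph{disprove} the conjecture by producing, for each prescribed value $m\geq 1$, such a Nakayama algebra $A$ with $\operatorname{domdim}(A)=m$ for which $B:=\End_A(I_0\oplus\Omega^{-m}(A))$ satisfies $\operatorname{domdim}(B)<m$.

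First I would fix a one-parameter family of Kupisch series (one member per target $m$) and compute, using the recursion $\Omega^{-1}([x,y])=[x-y,d_x-y]$ together with the injective-hull step $\Omega^{-1}(e_iA/e_iJ^k)=[\,i+k-1,\,k\,]$, the first few cosyzygies of every indecomposable projective $e_iA$. Reading off at which step the first non-projective indecomposable injective appears pins down $\operatorname{domdim}(A)$, and the family is chosen precisely so that this value equals $m$ and not more. The same computation delivers the two ingredients of $B$: the basic version of $I_0$ is the minimal faithful projective-injective $eA$ (since $\operatorname{domdim}(A)\geq 1$ forces $\add(I_0)=\add(eA)$), while $\Omega^{-m}(A)=\bigoplus_i\Omega^{-m}(e_iA)$ is read off from the resolution, the projective-injective summands contributing nothing. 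Converting every summand from the $[x,y]$-notation back into standard form $e_jA/e_jJ^\ell$ via Lemma \ref{umrechnung} then gives an explicit list of the indecomposable summands of $M:=I_0\oplus\Omega^{-m}(A)$.

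Next I would determine $B=\End_A(M)$ explicitly. Lemma \ref{homspace} yields the dimension of every Hom-space between indecomposable summands of $M$, which lets me write down the quiver and relations of $B$ (for these inputs $B$ will again be of Nakayama type, with Kupisch data computable by hand). With $B$ in hand I rerun the cosyzygy machinery a second time, now over $B$, to compute the minimal injective resolution of $B_B$ and to locate the first spot at which the injective is not projective. The family is engineered so that this happens strictly before step $m$, giving $\operatorname{domdim}(B)<m$ and contradicting the conjecture.

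The main obstacle is the second half. Unlike in the classical Auslander--M\"{u}ller correspondence, the module $M=I_0\oplus\Omega^{-m}(A)$ need not be a generator-cogenerator of $\mod A$, so I cannot read $\operatorname{domdim}(B)$ off a vanishing range of self-extensions $\Ext^i_A(M,M)$ and must instead compute the dominant dimension of $B$ from scratch through its own injective resolution. The delicate points are (i) arranging the Kupisch series so that $\operatorname{domdim}(A)$ is \emph{exactly} $m$, and (ii) controlling enough of the top of the injective resolution of $B$ to certify that its injective hull already fails to be projective early --- the cleanest outcome being $\operatorname{domdim}(B)=0$, i.e.\ $B$ not even QF-3. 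Verifying (ii) uniformly in $m$, rather than merely on small cases, is where the genuine work lies, and it is most safely carried out by keeping the entire construction inside the Nakayama world so that Lemmas \ref{Lengthinj}, \ref{umrechnung} and \ref{homspace} apply across the whole family.
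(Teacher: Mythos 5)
Your overall strategy is exactly the paper's: the conjecture is false, and the paper disproves it with connected Nakayama algebras with $c_{n-1}=1$ (Kupisch series $[3,3,\dots,3,2,1]$), computing $\operatorname{domdim}(A_n)$ via the cosyzygy recursion, realising $B_n=\End_{A_n}(eA_n\oplus\Omega^{-\hat{d_n}}(A_n))$ explicitly as a matrix algebra via Lemma \ref{homspace}, extracting its quiver and relations, and verifying $\operatorname{domdim}(B_{3m+1})=0$ and $\operatorname{domdim}(B_{3m+2})=1$, which are strictly smaller than $2m-1$ and $2m$ respectively. So in outline you have reproduced the paper's route, including the correct choice of toolkit.

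However, there is a genuine flaw in how you plan to execute the second half. You assert that ``for these inputs $B$ will again be of Nakayama type, with Kupisch data computable by hand'' and propose to ``rerun the cosyzygy machinery'' over $B$. This cannot work, for two reasons. First, it is internally inconsistent with your own stated goal: the paper's preliminaries record that \emph{every} Nakayama algebra is QF-3, i.e.\ has dominant dimension at least $1$, so if $B$ were Nakayama your ``cleanest outcome'' $\operatorname{domdim}(B)=0$ would be impossible; in particular no counterexample with $m=1$ could ever arise this way. Second, in the actual examples $B$ is genuinely not Nakayama: the quivers of $B_{3m+1}$ and $B_{3m+2}$ computed in the paper have branch vertices (e.g.\ the vertex $1$ of $B_{3m+1}$ receives the two arrows $\beta_0$ and $\gamma$), and it is precisely this branching that produces an indecomposable injective $D(Be_2)$ with non-simple top $S_0\oplus S_{n-1}$, hence not projective, forcing $\operatorname{domdim}(B_{3m+1})=0$. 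So the Kupisch/cosyzygy formulas of Lemmas \ref{Lengthinj} and \ref{umrechnung} are unavailable over $B$, and the dominant dimension of $B$ must instead be determined directly from its quiver with relations --- identifying which $e_iB$ are injective by socle and dimension comparisons, and exhibiting an injective hull that fails to be projective --- which is what the paper does and is where the real work of the counterexample lies.
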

One interest in this conjecture stems from the fact that in case $A$ has dominant dimension $n \geq 1$, the modules $I_0 \oplus \Omega^{-i}(A)$ are tilting modules for $1 \leq i \leq n$. That those modules are really tilting modules is mentioned in \cite{CX} and we give the easy proof in the next section.
We found a counterexample to this conjecture in case $A$ has dominant dimension 1. Changchang Xi then posed the question wheter there exists counterexamples of arbitrary dominant dimension $n \geq 1$. We will find a class of algebras answering his question.
\begin{lemma}
Let $A$ be an algebra with dominant dimension $d \geq 1$ and minimal faithful projective-injective module $eA$.
Let $I_0$ be the injective hull of $A$. Then $End_A(I_0 \oplus \Omega^{-d}(A))$ and $End_A(eA \oplus \Omega^{-d}(A))$ are Morita-equivalent and thus have the same dominant dimensions and Gorenstein dimensions.
\end{lemma}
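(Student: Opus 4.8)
The plan is to show that the two modules $I_0 \oplus \Omega^{-d}(A)$ and $eA \oplus \Omega^{-d}(A)$ have the same additive closure, i.e. $\add(I_0 \oplus \Omega^{-d}(A)) = \add(eA \oplus \Omega^{-d}(A))$, and then to invoke the standard fact that two modules with equal additive closure have Morita equivalent endomorphism algebras. Since the summand $\Omega^{-d}(A)$ is common to both modules, the whole statement reduces to the single identity $\add(I_0) = \add(eA)$. Granting this, the endomorphism algebras are Morita equivalent, and since dominant dimension and Gorenstein dimension are invariants of the module category (as already recorded, homological dimensions are Morita invariant), both are preserved, which is the desired conclusion.

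For the identity $\add(I_0) = \add(eA)$ I would argue by mutual divisibility of the two modules, using that both are projective-injective: $eA$ by hypothesis, and $I_0$ because $A$ has dominant dimension $\geq 1$, so the injective hull of the regular module is projective. For the inclusion $\add(I_0) \subseteq \add(eA)$, note that since $eA$ is faithful and finite dimensional, the regular module embeds into a finite power, $A \hookrightarrow (eA)^k$; as $(eA)^k$ is injective and contains $A$, the injective hull $I_0 = I(A)$ is a direct summand of $(eA)^k$, whence every indecomposable summand of $I_0$ is a summand of $eA$. For the reverse inclusion, write $A = \bigoplus_i P_i$, so that $I_0 = \bigoplus_i I(P_i)$; any indecomposable summand $Q$ of $eA$ is projective-injective, hence $Q \cong P_j$ for some $j$, and since $Q$ is injective we get $I(P_j) \cong Q$, which is a summand of $I_0$. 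This gives $\add(eA) \subseteq \add(I_0)$, and the two closures coincide.

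It remains to record the passage from equal additive closures to Morita equivalence of the endomorphism algebras. Here I would use that for a module $M$ the projectivization functor $\Hom_A(M,-)$ restricts to an equivalence from $\add(M)$ onto the category of finitely generated projective $\End_A(M)$-modules; since $\add(I_0 \oplus \Omega^{-d}(A)) = \add(eA \oplus \Omega^{-d}(A))$, the two categories of projectives are equivalent, which is exactly Morita equivalence of $\End_A(I_0 \oplus \Omega^{-d}(A))$ and $\End_A(eA \oplus \Omega^{-d}(A))$. Equivalently, each endomorphism algebra is Morita equivalent to the endomorphism algebra of the common basic module sharing that additive closure. The genuinely substantive step is the identity $\add(I_0) = \add(eA)$; the remaining implications are formal, and the invariance of dominant and Gorenstein dimension under Morita equivalence has already been noted, so I expect no real obstacle beyond the bookkeeping of the two divisibility inclusions.
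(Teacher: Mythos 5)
Your proof is correct and takes essentially the same route as the paper: establish $\add(I_0)=\add(eA)$, deduce $\add(I_0 \oplus \Omega^{-d}(A))=\add(eA \oplus \Omega^{-d}(A))$, invoke the standard fact that modules with the same additive closure have Morita equivalent endomorphism algebras, and conclude by Morita invariance of dominant and Gorenstein dimension. The only difference is one of detail: you actually prove the two inclusions giving $\add(I_0)=\add(eA)$ (via faithfulness of $eA$ and projective-injectivity of $I_0$), whereas the paper simply asserts this equality from $\operatorname{domdim}(A)\geq 1$ and cites Lemma 6.12 of Skowronski--Yamagata in place of your projectivization argument.
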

\begin{proof}
Just note that $add(I_0)=add(eA)$, since $A$ has dominant dimension at least one. Thus $add(I_0 \oplus \Omega^{-d}(A))=add(eA \oplus \Omega^{-d}(A))$ and therefore both algebras are Morita-equivalent by Lemma 6.12. of \cite{SkoYam}. The dominant and Gorenstein dimension are preserved, since a Morita equivalence $F$ between two algebras $A$ and $B$ sends a minimal injective resolution of the basic version of the regular module of $A$ to a minimal injective resolution of the basic version of the regular module of $B$ and furthermore $F$ preserves injective-projective modules.
\end{proof}
Because of the previous lemma, we will from now on always look at the algebra $End_A(eA \oplus \Omega^{-d}(A))$ instead of $End_A(I_0 \oplus \Omega^{-d}(A))$.
\begin{definition}
Let $n \geq 2$.
Let $A_n$ be defined as the connected quiver Nakayama algebra with Kupisch series $[c_0,c_1,...,c_{n-1}]$, where $c_i=3$ for $i=0,...,n-3$ and $c_{n-3+i}=3-i$ for $i=1$ and $i=2$. Let $\hat{d_n}$ denote the dominant dimension of $A_n$.
Then we define the algebra $B_n:=End_{A_n}(e_nA_n \oplus \Omega^{-\hat{d_n}}(A_n))$, where $e_nA_n$ is the minimal faithful projective-injective $A_n$-module.
\end{definition}
We keep the notation for $A_n$ and $B_n$ for this rest of this section.

We note that $A_n$ has for the dimension of the injective indecomposable modules just $c$ reverted: $d=[1,2,3,3,...,3]$, as can be easily checked using \hyperref[Lengthinj]{ \ref*{Lengthinj}}.

\begin{proposition}
\label{domdimA}
For $m \geq 1$, the following holds: \newline
1.$domdim(A_{3m})=gldim(A_{3m})=2m-1$. \newline
2.$domdim(A_{3m+1})=2m-1$ and $gldim(A_{3m+1})=2m$ and \newline $\Omega^{-(2m-1)}(A_{3m+1})=e_1 A/e_1J^2 \oplus e_1A/e_1 J^1$. \newline
3.$domdim(A_{3m+2})=2m$ and $gldim(A_{3m+2})=2m+1$ and \newline $\Omega^{-2m}(A_{3m+2})=e_0 A/e_0 J^2 \oplus e_1A/e_1 J^1$.
\end{proposition}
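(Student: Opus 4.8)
The plan is to reduce the computation to the two non-injective indecomposable projectives and then run the cosyzygy recursion explicitly in each residue class of $n$ modulo $3$.

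First I would record the two structural facts that make this reduction possible. Since $c_i=3$ for $0\le i\le n-3$ while $c_{n-2}=2$ and $c_{n-1}=1$, the injectivity criterion (namely $e_jA$ is injective iff $c_{j-1}\le c_j$) shows that $e_0A,\dots,e_{n-3}A$ are exactly the projective-injective indecomposables, whereas $e_{n-2}A$ and $e_{n-1}A$ are not injective; viewed as injectives these projective-injectives are $I_2,\dots,I_{n-1}$, so by Lemma \ref{umrechnung} the indecomposable injective $I_z=D(Ae_z)$ is projective precisely for $z\in\{2,\dots,n-1\}$, i.e. only $I_0$ and $I_1$ fail to be projective. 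Because the projective-injective summands have infinite dominant dimension and vanishing higher cosyzygies, the formula $domdim(A)=\min_i domdim(e_iA)$, together with $gldim(A)=\max_i injdim(e_iA)$ (valid here since finite global dimension forces $A$ to be Gorenstein with $Gdim(A)=gldim(A)$), reduces everything to the minimal injective resolutions of $M_{n-2}:=e_{n-2}A$ and $M_{n-1}:=e_{n-1}A$. The same summand argument shows $\Omega^{-t}(A)=\Omega^{-t}(M_{n-2})\oplus\Omega^{-t}(M_{n-1})$ for $t\ge 1$, which is what lets me read off the last assertion of parts 2 and 3.

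Next I would compute these two resolutions with the recursion $\Omega^{-1}([x,y])=[x-y,d_x-y]$ from the preliminaries, using $\dim[x,y]=d_x-y$, the fact that the injective hull of $[x,y]$ is $I_{x-y}$, and that $[x,y]$ is injective (equivalently $\Omega^{-1}$ of it vanishes) precisely when $d_x-y\ge d_{x-y}$. Starting from $\Omega^{-1}(e_iA/e_iJ^k)=[i+k-1,k]$ and feeding in $d=[1,2,3,\dots,3]$, a short induction produces a period-two pattern in which the first coordinate drops by $3$ every two steps: for $M_{n-2}$ the socle vertices are $s_0=n-1$, $s_{2k-1}=n-3k$ and $s_{2k}=n-3k-1$, and for $M_{n-1}$ they are $s_0=n-1$, $s_1=n-2$, $s_{2k}=n-3k-1$ and $s_{2k+1}=n-3k-2$ (for $k\ge 1$). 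The key observation is that the $j$-th injective term $I^j$ is non-projective iff $s_j\in\{0,1\}$, so $domdim$ is the first index at which $s_j$ meets $\{0,1\}$, while $injdim$ is the last index before the dimension $d_x-y$ drops to $0$.

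With these sequences in hand the three cases are a substitution. For $n=3m$ both modules first meet $\{0,1\}$ at index $2m-1$ and their resolutions terminate there, giving $domdim=gldim=2m-1$. For $n=3m+1$, $M_{n-2}$ first meets $\{0,1\}$ at $2m-1$ (via $s_{2m-1}=1$) while $M_{n-1}$ first meets it at $2m$ (via $s_{2m}=0$), so $domdim=2m-1$; both resolutions run to index $2m$, so $gldim=2m$, and $\Omega^{-(2m-1)}(M_{n-2})=[3,2]$, $\Omega^{-(2m-1)}(M_{n-1})=[3,1]$ convert by Lemma \ref{umrechnung} to $e_1A/e_1J\oplus e_1A/e_1J^2$. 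For $n=3m+2$ both modules first meet $\{0,1\}$ at index $2m$, so $domdim=2m$, while the resolutions terminate at $2m$ and $2m+1$ respectively, giving $gldim=2m+1$; here $\Omega^{-2m}(M_{n-2})=[2,1]$ and $\Omega^{-2m}(M_{n-1})=[3,2]$ convert to $e_0A/e_0J^2\oplus e_1A/e_1J$.

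The main obstacle I anticipate is the bookkeeping at the ends of the resolutions rather than the recursion itself: one must check that the first coordinate never wraps around modulo $n$ before the resolution stops (so that the values $d_x\in\{1,2,3\}$ inserted into the recursion are genuinely $d_0=1$, $d_1=2$, $d_x=3$ for $x\ge 2$), pin down the exact step at which $\dim[x,y]=d_x-y$ becomes $0$ in order to get the injective dimension right, and confirm that the degenerate small cases (notably $m=1$, where some index ranges collapse) still obey the general pattern. These are all finite verifications, but they are where an off-by-one slip would enter, so I would treat the terminal two or three cosyzygies in each case explicitly.
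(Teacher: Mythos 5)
Your proposal is correct and takes essentially the same route as the paper: reduce to the two non-injective indecomposable projectives $e_{n-2}A$ and $e_{n-1}A$ (the remaining projectives being projective-injective, with $e_iA\cong D(Ae_{i+2})$, so that exactly $I_0$ and $I_1$ are non-projective), iterate the cosyzygy recursion $\Omega^{-1}([x,y])=[x-y,d_x-y]$ with $d=[1,2,3,\dots,3]$, split into the three residue classes of $n$ modulo $3$, and convert the terminal cosyzygies via Lemma \ref{umrechnung}. The only difference is cosmetic bookkeeping --- you track the socle vertices $s_j$ of the cosyzygies rather than the pairs $[i_s,k_s]$ --- which if anything handles the termination of the resolutions a bit more cleanly than the paper's formal continuation past the zero module.
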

\begin{proof}
Note first that $A=A_n$ has exactly $n-2$ projective-injective indecomposable modules, namely $e_0A ,e_1A,...,e_{n-3}A$ because $3=c_{i-1} \leq c_i=3$ holds in those cases. Furthermore, $e_iA \cong D(Ae_{i+2})$ in this case. Thus we only have to calculate the dominant dimensions and injective dimensions of the projective modules $e_{n-2}A$ and $e_{n-1}A$. We calculate the relevant cosyzygies and then the dominant and injective dimensions can be read off: 
We have $\Omega^{-1}(e_{n-2}A)=[n-1,2]$ and using our formula from the preliminaries to calculate cosyzygies and using induction one has $\Omega^{-(2l+1)}(e_{n-2}A)=[n-(3l+1),2]$ for $0 \leq l$, such that $n-(3l+1) \geq 2$ and $\Omega^{-(2l)}(e_{n-2}A)=[n-3l,1]$ for $1 \leq l$, such that $n-3l \geq 2$.
Similarly, $\Omega^{-1}(e_{n-1}A)=[n-1,1]$ and $\Omega^{-(2l+1)}=[n-(3l+1),1]$ for $0 \leq l$, such that $n-(3l+1) \geq 2$ and $\Omega^{-(2l)}(e_{n-2}A)=[n-(3l-1),1]$ for $0 \leq l$, such that $n-(3l-1) \geq 2$. \newline
Now we begin the proof 1.,2. and then 3.: \newline
1. Setting $n=3m$, we get that $\Omega^{-s}(e_{3m-2}A)=[i_s,k_s]$ with $i_s \notin \{2,3,...,n-1 \}$ for the first time for $s=2m$ and thus $domdim(e_{3m-2}A)=2m-1$ in this case. We get that $\Omega^{-s}(e_{3m-2}A)=[i_s,k_s]$ with $k_s=0$ for the first time for $s=2m+1$ and thus $injdim(e_{3m-2}A)=2m-1$ in this case. Similarly, we get that $\Omega^{-s}(e_{3m-1}A)=[i_s,k_s]$ with $i_s \notin \{2,3,...,n-1 \}$ for the first time for $s=2m$ and thus $domdim(e_{3m-1} A)=2m-1$ in this case. We get that $\Omega^{-s}(e_{3m-1}A)=[i_s,k_s]$ with $k_s=0$ for the first time for $s=2m+1$ and thus $injdim(e_{3m-1}A)=2m-1$ in this case. \newline
2. The cosyzygies at the end of the minimal injective resolution of $e_{3m-1}A$ look as follows: $[3,2] \rightarrow [1,1] \rightarrow [0,1] \rightarrow [-1,0]$, where $[3,2]=\Omega^{-(2m-1)}(e_{3m-1}A)$. Thus $domdim(e_{3m-1}A)=2m-1$ and $injdim(e_{3m-1}A)=2m$.
The cosyzygies at the end of the minimal injective resolution of $e_{3m}A$ look as follows: $[3,1] \rightarrow [2,2] \rightarrow [0,1] \rightarrow [-1,0]$, where $[3,1]=\Omega^{-(2m-1)}(e_{3m}A)$. Thus $domdim(e_{3m}A)=2m$ and $injdim(e_{3m}A)=2m$. Thus $domdim(A_{3m+1})=2m-1$ and $injdim(A_{3m+1})=2m$. $\Omega^{-(2m-1)}(A_{3m+1})=[3,1] \oplus [3,2]$. Note that $[3,1]=D(J^1e_3) \cong e_1 A/e_1J^2$ and $[3,2]=D(J^2 e_3) \cong e_1A/e_1 J^1 \cong S_1$, by \hyperref[umrechnung]{ \ref*{umrechnung}}. \newline
3. The cosyzygies at the end of the minimal injective resolution of $e_{3m}A$ look as follows: $[4,2] \rightarrow [2,1] \rightarrow [1,2] \rightarrow [-1,0]$, where $[4,2]=\Omega^{-(2m-1)}(e_{3m}A)$. Thus $domdim(e_{3m}A)=2m$ and $injdim(e_{3m}A)=2m$.
The end cosyzygies at the of the minimal injective resolution of $e_{3m+1}A$ look as follows: $[4,1] \rightarrow [3,2] \rightarrow [1,1] \rightarrow [0,1] \rightarrow [-1,0]$, where $[4,1]=\Omega^{-(2m-1)}(e_{3m+1}A)$. Then $domdim(e_{3m-1}A)=2m$ and $injdim(e_{3m-1}A)=2m+1$. Thus $domdim(A_{3m+2})=2m$ and $injdim(A_{3m+2})=2m+1$. $\Omega^{-(2m)}(A_{3m+1})=[2,1] \oplus [3,2]$. Note that $[2,1]=D(J^1e_3) \cong e_0 A/e_0 J^2 $ and $[3,2]=D(J^2 e_3) \cong e_1A/e_1 J^1 \cong S_1$, using \hyperref[umrechnung]{ \ref*{umrechnung}}. 
\end{proof}
We note that the algebras $A_n$ for $n \neq 0$ mod $n$ can have any positive integer as dominant dimension and thus those algebras are candidates for examples of algebras with arbitrary dominant dimensions not having property *. In the rest of this section we will show that those algebras indeed do not have property *.
We now concentrate on the algebras $A_n$ with $n \neq 0$ mod $n$, since we will look at the case $n \equiv 0$ mod $n$ in more generality in the next section.

Now we calculate the quiver and relations of the the algebras $B_n$ for $n \neq 0$ mod $n$ in order to calculate their dominant dimensions:
\begin{lemma}
Let $m \geq 1$. \newline
1. $B_{3m+1}$ is isomorphic to the quiver algebra $C_1$, which looks as follows ($n=3m+1$): \newline
$\xymatrix@1{  & \circ^{n-1}\ar [d]^{\gamma} &  &  & & &  &  \\ \circ^{0}\ar[r]^{\beta_0} & \circ^{1}\ar[r]^{\beta_1} & \circ^2\ar[r]^{\beta_2} & \circ^3\ar[r]^{\beta_3} & \circ^4\ar@{} [r] |(.45){\cdots} & \circ^{n-3}\ar[r]^{\beta_{n-3}} & \circ^{n-2}}$ \newline and having the relations that all paths of length larger or equal to three vanish expect the path $\gamma \beta_1 \beta_2$. \newline
2. $B_{3m+2}$ is isomorphic to the quiver algebra $C_2$, which looks as follows ($n=3m+2$): \newline
\noindent $\xymatrix@1{ \circ^{n-1}\ar [r]^{\alpha} \ar [d]^{\delta} & \circ^{n-2}\ar [d]^{\gamma} &  &  & & &  &  \\ \circ^{0}\ar[r]^{\beta_0} & \circ^{1}\ar[r]^{\beta_1} & \circ^2\ar[r]^{\beta_2} & \circ^3\ar[r]^{\beta_3} & \circ^4\ar@{} [r] |(.45){\cdots} & \circ^{n-4}\ar[r]^{\beta_{n-4}} & \circ^{n-3}}$
 \newline and having the relations that all paths of length larger or equal to three vanish and additionally $\beta_0 \beta_1=0$ and $\alpha \gamma=\delta \beta_0$.
\end{lemma}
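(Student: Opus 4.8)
The proof is a direct computation of the Gabriel quiver and relations of $B_n=\End_{A_n}(e_nA_n\oplus\Omega^{-\hat d_n}(A_n))$, so I would first record the indecomposable summands of $M:=e_nA_n\oplus\Omega^{-\hat d_n}(A_n)$. By \ref{domdimA} the summand $e_nA_n$ is the direct sum of the $n-2$ projective-injectives $P_0,\dots,P_{n-3}$, while $\Omega^{-\hat d_n}(A_n)$ contributes two further indecomposables: $N:=e_1A/e_1J^2$ and $S:=e_1A/e_1J=S_1$ in the case $n=3m+1$, and $N':=e_0A/e_0J^2$ and $S:=e_1A/e_1J=S_1$ in the case $n=3m+2$. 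In either case $M$ is basic with exactly $n$ pairwise non-isomorphic indecomposable summands, so $B_n$ is basic and its quiver has $n$ vertices, as in $C_1$ resp. $C_2$. The arrows are given by a basis of $\rad/\rad^2$ of the category $\add M$, i.e. by the irreducible morphisms between the summands, and the relations by the vanishing (or coincidence) of compositions of such morphisms. All of these $\Hom$-spaces are at most one-dimensional and are computed mechanically from \ref{homspace} (together with \ref{umrechnung} to rewrite the summands as quotients $e_iA/e_iJ^k$), so the whole proof reduces to organising this finite bookkeeping.

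For $B_{3m+1}$ I would first list the nonzero radical maps between the summands. The projective-injectives form a chain $P_{n-3}\to\cdots\to P_1$ of the natural maps $P_{i+1}\to P_i$; in addition there is a projection $P_1\twoheadrightarrow N$, an inclusion $N\hookrightarrow P_0$ and a projection $N\twoheadrightarrow S$. One then checks which of these are irreducible: crucially $P_1\to P_0$ is \emph{not} irreducible, since it factors as $P_1\twoheadrightarrow N\hookrightarrow P_0$, while neither $S$ nor $P_0$ admits any outgoing radical map. This produces exactly the quiver $C_1$ under the identification $\circ^0=S$, $\circ^1=N$, $\circ^{k+1}=P_k$ for $1\le k\le n-3$ and $\circ^{n-1}=P_0$ (the global orientation being fixed once and for all by the right-module convention). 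Finally I would compute the triple compositions: every composite of three consecutive irreducible maps vanishes, with the single exception of $N\hookrightarrow P_0$ precomposed with $P_2\to P_1\twoheadrightarrow N$, whose image is the copy of $S_2$ inside $P_0$. This is precisely the surviving path $\gamma\beta_1\beta_2$, and it is nonzero exactly because it lands in $P_0$ rather than in $S$, the analogous composite into $S$ being killed by the projection $N\twoheadrightarrow S$.

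For $B_{3m+2}$ the method is identical but the local picture around the two extra vertices is richer: here $\Hom_A(N',P_0)=0$, the simple $S=S_1$ embeds into $N'$, and $P_0$ projects onto $N'$. Collecting the irreducible maps and relabelling gives the quiver $C_2$ with its two extra vertices $\circ^{n-2},\circ^{n-1}$. The two nontrivial relations require the most care: $\beta_0\beta_1=0$ follows from computing the relevant length-two composite directly, while the commutativity relation $\alpha\gamma=\delta\beta_0$ expresses the fact that a single nonzero morphism between two of the summands admits two distinct factorisations through the new vertices, both equal to it in $\End_A(M)$.

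The routine part is the enumeration of $\Hom$-spaces via \ref{homspace}; the genuine obstacle is the relation-finding, namely (i) verifying that in $C_1$ exactly one degree-three path survives and identifying it correctly with $\gamma\beta_1\beta_2$, and (ii) establishing the commuting square $\alpha\gamma=\delta\beta_0$ in $C_2$, since both depend on delicate cancellations of socles against tops rather than on mere non-vanishing of $\Hom$. One must also fix the orientation convention at the outset so that the computed quiver is $C_1$/$C_2$ and not its opposite, and check the smallest cases (e.g. $m=1$) separately to ensure that the chain $P_{n-3}\to\cdots\to P_1$ and the extra arrows do not degenerate.
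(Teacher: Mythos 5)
Your calculation follows essentially the same route as the paper's: both determine the arrows of $B_n$ as a basis of $\operatorname{rad}/\operatorname{rad}^2$ from the Hom-spaces between the $n$ indecomposable summands of $eA_n\oplus\Omega^{-\hat d_n}(A_n)$, computed via Lemma \ref{homspace}; the paper merely organises the data as a $3\times 3$ block matrix algebra (blocks $eA$, $N$, $S$, resp.\ $eA$, $N'$, $S$) and computes $\operatorname{rad}$ and $\operatorname{rad}^2$ of that matrix algebra, instead of listing irreducible maps summand by summand. Your individual claims are correct: in case 1 the maps $P_{k+1}\to P_k$ ($1\le k\le n-4$), $P_1\twoheadrightarrow N$, $N\hookrightarrow P_0$ and $N\twoheadrightarrow S$ are exactly the irreducible ones, $P_1\to P_0$ fails to be irreducible because it factors through $N\cong e_0J$, and the unique surviving length-three composite is $P_2\to P_1\to N\hookrightarrow P_0$, with image $\soc(P_0)\cong S_2$. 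Your caution about orientation is also warranted: your labelling ($\circ^{n-1}=P_0$, $\circ^0=S$) is the one under which this composite really is the path $\gamma\beta_1\beta_2$ of the statement, whereas with the opposite assignment the surviving path would be $\beta_0\beta_1\beta_2$.

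There is, however, one genuine gap. Establishing the quiver and checking that the stated relations hold among your chosen irreducible maps only yields a \emph{surjective} algebra homomorphism $C_1\twoheadrightarrow B_{3m+1}$ (resp.\ $C_2\twoheadrightarrow B_{3m+2}$); nothing in the proposal excludes further relations, i.e.\ a nonzero kernel. This is precisely what the paper's final step supplies: it computes $\dim_K B_{3m+1}=\dim_K eAe+7=(9m-6)+7=9m+1$ (the extra $7$ coming from the Hom-spaces involving $N$ and $S$), observes that $C_1$ also has dimension $9m+1$ by counting its nonzero paths, and concludes that the surjection is an isomorphism; likewise $\dim_K B_{3m+2}=(9m-3)+6=9m+3=\dim_K C_2$. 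You have all the input needed for this count, since every Hom-space between summands is at most one-dimensional and you have determined all of them, so $\dim_K B_n$ is just their sum; but the comparison itself is never performed, and ``the relations are the vanishing or coincidence of composites'' is not by itself a proof that the listed relations generate the whole kernel. Adding this dimension comparison (or some other argument for injectivity of the induced map $C_i\to B_n$) is necessary to complete the proof.
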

\begin{proof}
We write $\alpha_i$ for the arrows in $A_n$ as in the quiver from the end of the preliminaries. \newline
1.First let $n=3m+1$ and $A=A_{3m+1}$ and note that $A$ has dimension $9m$. We first view $B_{3m+1}$ as a matrix algebra: \newline
$B_{3m+1}= End_A(eA \oplus e_1 A/e_1J^2 \oplus e_1A/e_1 J^1)= \newline
\begin{pmatrix}[1]
  Hom(eA,eA) & Hom(e_1A/e_1J^2,eA) & Hom(e_1A/e_1J^1,eA) \\
  (e_1 A/e_1J^2)e & Hom(e_1A/e_1J^2,e_1A/e_1J^2) & Hom(e_1A/e_1J^1,e_1A/e_1J^2) \\
  (e_1A/e_1J^1)e & Hom(e_1 A/e_1 J^2,e_1A/e_1 J^1) & Hom(e_1A/e_1J^1,e_1A/e_1J^1)
 \end{pmatrix}
 $.
 Calculating the individual entries using \hyperref[homspace]{ \ref*{homspace}}, we get the following: \newline
 $B_{3m+1}= 
\begin{pmatrix}[1]
  eAe & eJe_1 & 0 \\
  (e_1 A/e_1J^2)e & e_1 A e_1/e_1 J^2e_1 & 0 \\
  (e_1A/e_1J^1)e & (e_1A/e_1J^1)e_1 & e_1Ae_1/e_1J^1e_1
 \end{pmatrix}
 $.
$eAe$ is just the Nakayama algebra with Kupisch series $[3,3,3,...,3,2,1]$, with two $3$'s less than in the Kupisch series of $A_{3m+1}$. $eAe$ then has dimension $9m-6$ and $B_{3m+1}$ has dimension $9m+1$.
It follows that $rad(B_{3m+1})= \newline
\begin{pmatrix}[1]
  eJe & eJe_1 & 0 \\
  (e_1 A/e_1J^2)e & 0 & 0 \\
  (e_1A/e_1J^1)e & (e_1A/e_1J^1)e_1 & 0
 \end{pmatrix}
 $.
Then we get $rad^2(B_{3m+1})= \newline
\begin{pmatrix}[1]
  eJe & eJe_1 & 0 \\
  (e_1 A/e_1J^2)e & 0 & 0 \\
  (e_1A/e_1J^1)e & (e_1A/e_1J^1)e_1 & 0
 \end{pmatrix}
 \cdot 
 \begin{pmatrix}[1]
  eJe & eJe_1 & 0 \\
  (e_1 A/e_1J^2)e & 0 & 0 \\
  (e_1A/e_1J^1)e & (e_1A/e_1J^1)e_1 & 0
 \end{pmatrix}
 = \newline
 \begin{pmatrix}[1]
  eJeJe+eJe_1(e_1A/e_1J^2)e & 0 & 0 \\
  (e_1 A/e_1J^2)eJe & 0 & 0 \\
  (e_1A/e_1J^1)e & 0 & 0
 \end{pmatrix}
$.
Now we can calculate $rad(B_{3m+1})/rad^2(B_{3m+1})$ having a respective $K$-basis as entries: \newline
$rad(B_{3m+1})/rad^2(B_{3m+1})= 
 \begin{pmatrix}[1]
 <\alpha_1, \alpha_2,...,\alpha_{n-4}> & <\hat{\alpha_0}> & 0 \\
  <\hat{e_1}> & 0 & 0 \\
  0 & <\hat{e_1}> & 0
 \end{pmatrix} $. \newline
Here an hat over an element denotes that it is a rest class in some factor module.
Now for $i=2,3,...,n-3$ set $\beta_i=  
\begin{pmatrix}[1]
 \alpha_{i-1} & 0 & 0 \\
  0 & 0 & 0 \\
  0 & 0 & 0
 \end{pmatrix}$ and $\beta_0=
  \begin{pmatrix}[1]
 0 & \hat{\alpha_0} & 0 \\
  0 & 0 & 0 \\
  0 & 0 & 0
 \end{pmatrix}$, $\beta_1=
  \begin{pmatrix}[1]
 0& 0 & 0 \\
  \hat{e_1} & 0 & 0 \\
  0 & 0 & 0
 \end{pmatrix}$ and $\gamma=
  \begin{pmatrix}[1]
 0 & 0 & 0 \\
  0 & 0 & 0 \\
  0 & \hat{e_1} & 0
 \end{pmatrix}$.
Let $C_1$ be the quiver algebra of the above quiver with relations. Then $C_1$ has dimension $9m+1$ and surjects into $B_{3m+1}$ by sending arrows in $C_1$ to the same named arrows in $B_{3m+1}$, since those relations are also satisfied in the matrix algebra $B_{3m+1}$ using the corresponding arrows in the above presentation of $rad(B_{3m+1})/rad^2(B_{3m+1})$. Comparing dimension of $C_1$ and $B_{3m+1}$, one sees that $C_1$ is isomorphic to $B_{3m+1}$. \newline
2. First let $n=3m+2$ and $A=A_{3m+2}$ and note that $A$ has dimension $9m+3$. We first view $B_{3m+2}$ as a matrix algebra: \newline
$B_{3m+2}= End_A(eA \oplus e_0 A/e_0 J^2 \oplus e_1A/e_1 J^1)= \newline
\begin{pmatrix}[1]
  Hom(eA,eA) & Hom(e_0A/e_0J^2,eA) & Hom(e_1A/e_1J^1,eA) \\
  (e_0 A/e_0J^2)e & Hom(e_0A/e_0J^2,e_0A/e_0J^2) & Hom(e_1A/e_1J^1,e_0A/e_0J^2) \\
  (e_1A/e_1J^1)e & Hom(e_0 A/e_0 J^2,e_1A/e_1 J^1) & Hom(e_1A/e_1J^1,e_1A/e_1J^1)
 \end{pmatrix}
 $.
 Calculating the individual entries using \hyperref[homspace]{ \ref*{homspace}}, we get the following: \newline
 $B_{3m+2}= 
\begin{pmatrix}[1]
  eAe & 0 & 0 \\
  (e_0 A/e_0J^2)e & e_0 A e_0/e_0 J^2e_0 & (e_0A/e_0J^2)e_1 \\
  (e_1A/e_1J^1)e & 0 & e_1Ae_1/e_1J^1e_1
 \end{pmatrix}
 $.
$eAe$ is just the Nakayama algebra with Kupisch series $[3,3,3,...,3,2,1]$, with two $3$'s less than in the Kupisch series of $A_{3m+2}$. $eAe$ then has dimension $9m-3$ and $B_{3m+1}$ has dimension $9m-3+6=9m+3$.
It follows that $rad(B_{3m+1})= \newline
\begin{pmatrix}[1]
  eJe & 0 & 0 \\
  (e_0 A/e_0J^2)e & 0 & (e_0A/e_0J^2)e_1 \\
  (e_1A/e_1J^1)e & 0 & 0
 \end{pmatrix}
 $.
Then we get $rad^2(B_{3m+1})= \newline
\begin{pmatrix}[1]
  eJe & 0 & 0 \\
  (e_0 A/e_0J^2)e & 0 & (e_0A/e_0J^2)e_1 \\
  (e_1A/e_1J^1)e & 0 & 0
 \end{pmatrix}
 \cdot 
\begin{pmatrix}[1]
  eJe & 0 & 0 \\
  (e_0 A/e_0J^2)e & 0 & (e_0A/e_0J^2)e_1 \\
  (e_1A/e_1J^1)e & 0 & 0
 \end{pmatrix}
 = \newline
 \begin{pmatrix}[1]
  eJeJe & 0 & 0 \\
  (e_0 A/e_0J^2)eJe +(e_0A/e_0J^2)e_1(e_1A/e_1J^1)e & 0 & 0 \\
  (e_1A/e_1J^1)eJe & 0 & 0
 \end{pmatrix}
$. \newline
Now we can calculate $rad(B_{3m+2})/rad^2(B_{3m+2})$ having a respective $K$-basis as entries: \newline
$rad(B_{3m+2})/rad^2(B_{3m+2})= 
 \begin{pmatrix}[1]
 <\alpha_0,\alpha_1, \alpha_2,...,\alpha_{n-4}> & 0 & 0 \\
  <\hat{\alpha_0}> & 0 & <\hat{\alpha_0}> \\
  <\hat{e_1}> & 0 & 0
 \end{pmatrix} $. \newline
Here an hat over an element denotes that it is a rest class in some factor module.
Now for $i=0,1,...,n-4$ set $\beta_i=  
\begin{pmatrix}[1]
 \alpha_i & 0 & 0 \\
  0 & 0 & 0 \\
  0 & 0 & 0
 \end{pmatrix}$ and $\gamma=
  \begin{pmatrix}[1]
 0 & 0 & 0 \\
  \hat{\alpha_0} & 0 & 0 \\
  0 & 0 & 0
 \end{pmatrix}$, $\alpha=
  \begin{pmatrix}[1]
 0& 0 & 0 \\
  0 & 0 & \hat{\alpha_0} \\
  0 & 0 & 0
 \end{pmatrix}$ and $\delta=
  \begin{pmatrix}[1]
 0 & 0 & 0 \\
  0 & 0 & 0 \\
  \hat{e_1} & 0 & 0
 \end{pmatrix}$. \newline
Let $C_2$ be the quiver algebra of the above quiver with relations. Then $C$ has dimension $9m+3$ and surjects into $B_{3m+1}$ by sending arrows in $C_2$ to the same named arrows in $B_{3m+1}$, since those relations are also satisfied in the matrix algebra $B_{3m+2}$ using the corresponding arrows in the above presentation of $rad(B_{3m+2})/rad^2(B_{3m+2})$. Comparing dimension of $C_2$ and $B_{3m+2}$, one sees that $C_2$ is isomorphic to $B_{3m+2}$.

\end{proof}

\begin{proposition}
Let $m \geq 1$. \newline
1. $domdim(B_{3m+1})=0$. \newline
2. $domdim(B_{3m+2})=1$. \newline
Thus the algebras $A_{3m+1}$ and $A_{3m+2}$ do not have property *.
\end{proposition}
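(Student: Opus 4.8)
The plan is to use the explicit quivers with relations of $C_1\cong B_{3m+1}$ and $C_2\cong B_{3m+2}$ from the previous lemma and, in each case, to inspect only the very beginning of the minimal injective resolution of the regular module. Throughout I would rely on three elementary facts: that $\operatorname{domdim}(B)=\min_i\operatorname{domdim}(e_iB)$; that the injective hull of a module $M$ is governed by its socle, so if $\soc M=\bigoplus_j S_j^{m_j}$ then the injective hull is $\bigoplus_j I_j^{m_j}$; and that an indecomposable projective has simple top, so an \emph{indecomposable} injective with non-simple top can never be projective. Since all the indecomposable projectives of $C_1$ and $C_2$ turn out to have simple socle, these reduce the whole statement to reading off indecomposable projectives (paths starting at a vertex, modulo relations) and indecomposable injectives (paths ending at a vertex).

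For part 1 I would first record that, because the only exceptional non-vanishing length-three path in $C_1$ is $\gamma\beta_1\beta_2$, the projective $P_0=e_0C_1$ is uniserial with composition factors $S_0,S_1,S_2$ from top to bottom, hence has socle $S_2$. Dually, the paths ending at vertex $2$, namely $e_2,\beta_1,\beta_0\beta_1,\gamma\beta_1$, show that $I_2$ has socle $S_2$, a single $S_1$ above it, and top $S_0\oplus S_{n-1}$. Thus $I_2$ is the injective hull of $P_0$ but is an indecomposable injective with two-dimensional top, so it is not projective; therefore $\operatorname{domdim}(P_0)=0$ and $\operatorname{domdim}(B_{3m+1})=0$.

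For part 2 I would argue in two steps. First, to get $\operatorname{domdim}(B_{3m+2})\ge 1$, I list the socles of the indecomposable projectives $e_iC_2$ and check that each corresponding indecomposable injective is projective: counting paths ending at each vertex gives $I_1\cong P_{n-1}$ (the four-dimensional ``diamond'' with top $S_{n-1}$, middle $S_0\oplus S_{n-2}$ and socle $S_1$, where the single top copy of $S_{n-1}$ appears precisely because of the relation $\alpha\gamma=\delta\beta_0$), then $I_2\cong P_{n-2}$, $I_3\cong P_1$, and $I_j\cong P_{j-2}$ for the remaining interior vertices, so every indecomposable projective has projective injective hull. Second, to see the dominant dimension is no larger than $1$, I take $P_0$, which by $\beta_0\beta_1=0$ is the uniserial module $S_0,S_1$ with socle $S_1$; its injective hull is $I_1\cong P_{n-1}$, and the quotient $\Omega^{-1}(P_0)=I_1/P_0$ is the uniserial module $S_{n-1},S_{n-2}$ with socle $S_{n-2}$. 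Its injective hull is $I_{n-2}$, which from the paths $e_{n-2},\alpha$ ending at $n-2$ is the two-dimensional uniserial module $S_{n-1},S_{n-2}$; since the only indecomposable projective with top $S_{n-1}$ is the four-dimensional $P_{n-1}$, this $I_{n-2}$ is not projective. Hence already the first cosyzygy of $P_0$ has non-projective injective hull, so $\operatorname{domdim}(P_0)=1$ and $\operatorname{domdim}(B_{3m+2})=1$.

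The failure of property * is then immediate: by Proposition \ref{domdimA} we have $\operatorname{domdim}(A_{3m+1})=2m-1\ge 1$ while $\operatorname{domdim}(B_{3m+1})=0$, and $\operatorname{domdim}(A_{3m+2})=2m\ge 2$ while $\operatorname{domdim}(B_{3m+2})=1$; in both cases the dominant dimension of $B_n$ differs from that of $A_n$, so neither algebra has property *. The step I expect to be the main obstacle is the accurate determination of the indecomposable injectives from the relations, in particular keeping careful track of the exceptional path of $C_1$ and of the commutativity and zero relations of $C_2$ (it is exactly $\alpha\gamma=\delta\beta_0$ that forces $I_1$ to be projective and $\beta_0\beta_1=0$ that shortens $P_0$), together with the behaviour at the two ends of the quiver and the smallest cases such as $m=1$, where the generic descriptions of the $P_i$ and $I_j$ must be verified directly.
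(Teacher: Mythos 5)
Your proof is correct and takes essentially the same approach as the paper: both arguments read off socles and injective hulls from the quiver presentations of $C_1$ and $C_2$, in part 1 showing that the hull $D(Be_2)$ of $e_0B$ has non-simple top $S_0\oplus S_{n-1}$ and hence cannot be projective, and in part 2 first checking that every indecomposable projective of $C_2$ has projective injective hull and then that the first cosyzygy of $e_0B$ does not. The one (minor) divergence is in your favour: where the paper asserts $e_0B\cong e_{n-1}J$ (a slip, since $e_{n-1}J$ is $3$-dimensional while $e_0B$ is $2$-dimensional) and so takes $\Omega^{-1}(e_0B)\cong S_{n-1}$ with hull $D(Be_{n-1})$, you correctly compute $\Omega^{-1}(e_0B)$ as the $2$-dimensional uniserial module with top $S_{n-1}$ and socle $S_{n-2}$, whose hull $D(Be_{n-2})$ is likewise non-projective, so the conclusion $\operatorname{domdim}(B_{3m+2})=1$ is the same.
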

\begin{proof}
1. We use the presentation as quiver and relations of $B=B_{3m+1}$, given in the previous lemma.
We show that $domdim(e_0B)=0$. For this note that $soc(e_0B)=e_0J^2=S_2$ and thus the injective hull of $e_0B$ is $D(Be_2)$.
To show that $D(Be_2)$ is not projective, use $top(D(Be_2))=Dsoc(Be_2)=D(S_0 \oplus S_{n-1})=S_0 \oplus S_{n-1}$. Thus the indecomposable module $D(Be_2)$ does not have a simple top and thus can not be projective. Thus $domdim(B)=0$ in this case. \newline
2. We use the presentation as quiver and relations of $B=B_{3m+2}$ derived in the previous lemma.
First we show that $e_iB$ is injective iff $i \neq 0,n-4,n-3$. 
Note that $e_{n-1}B$ has dimension 4 and $soc(e_{n-1}B)=e_{n-1}J^2 \cong S_1$.
Thus $e_{n-1}B$ injects into $D(Be_1)$ but since $dim(D(Be_1))=4$, one has $e_{n-1}B \cong D(Be_1)$.
Similarly $e_{n-2}B$ has dimension 3 and $soc(e_{n-2}B)=e_{n-2}J^2 \cong S_2$.
Thus $e_{n-2}B$ injects into $D(Be_2)$ but since $dim(D(Be_2))=3$, one has $e_{n-2}B \cong D(Be_2)$.
Now let $i \in \{1,2,3,...,i-5 \}$. Then $e_iB$ has dimension 3 and $soc(e_iB)=e_iJ^2 \cong S_{i+2}$.
Thus $e_i B$ injects into $D(Be_{i+2})$ but since $dim(D(Be_{i+2}))=3$, one has $e_iB \cong D(Be_{i+2})$.
$e_{n-4}B$ has dimension two and $soc(e_{n-4}B) = e_{n-4}J \cong S_{n-3}$. Therefore $e_{n-4}B$ injects into $D(Be_{n-3})$ (this injection is no isomorphism since $dim(D(Be_{n-3}))=3>2$), which we saw is projective. Thus $domdim(e_{n-4}B) \geq 1$.
Clearly, $e_{n-3}B \cong S_{n-3}$ injects into $D(Be_{n-3})$ and thus also $domdim(e_{n-3}B) \geq 1$.
Now $e_0B$ has dimension two and $soc(e_0B)=e_0J \cong S_1$. Thus $e_0B$ injects into $D(Be_1) \cong e_{n-1}B$ and comparing dimensions, one has $e_0B \cong e_{n-1}J$. Now, the injective hull of $e_{n-1}B/e_{n-1}J \cong S_{n-1}$ is $D(Be_{n-1})$, which is not projective.
Thus $domdim(e_0B)=1$ and therefore $domdim(B)=1$.  
\end{proof}

We note that $A_n$ and $B_n$ are derived equivalent, since $eA \oplus \Omega^{-\hat{d_n}}(A)$ is a tilting $A$-module and thus the dominant dimension of derived equivalent algebras can differ by an arbitrary large number.
\section{Property * for higher Auslander-Solberg algebras}
We recall the definition of a tilting module. We refer for example to \cite{Rei} for statements with no proof here.
\begin{definition}
Let $m \geq 1$ be a natural number.
Let $T$ be an $A$-module, then $T$ is called an $m$-tilting module in case it has the following three properties: \newline
1. $projdim(T) \leq m$ \newline
2. $Ext^{i}(T,T)=0$ for every $i \geq 1$ \newline
3. There exists an exact sequence of the form \newline $0 \rightarrow A \rightarrow T_0 \rightarrow T_1 \rightarrow ... \rightarrow T_m \rightarrow 0$ with $T_i \in add(T)$.
\end{definition}
We often say the shorter term tilting module instead of $m$-tilting module when $m$ does not matter that much.
We note that assuming conditions 1. and 2.,  the third condition is equivalent to the basic version of $T$ having exactly $s$ indecomposable summands, where $s$ denotes the number of simple $A$-modules. 
In the previous section we did not answer the question wheter the algebras $A_n$ for $n \equiv 0$ mod 3 have property *. Note that by \hyperref[domdimA]{ \ref*{domdimA}} 1. $A_n$ is a higher Auslander algebra in this case.
In this section we prove that algebras with Gorenstein dimension equal to the dominant dimension $domdim(A) \geq 2$, which we call \textit{higher Auslander-Solberg algebras} (the motivation for this name is the paper \cite{AS}, where such algebras seem to appear for the first time) have property *. This class of algebras clearly generalize higher Auslander algebras, that is algebras $A$ having global dimension equal to dominant dimension $dimdim(A) \geq 2$. To see this, just note that the Gorenstein dimension equals the global dimension, in case the global dimension is finite. Chen and Koenig recently found a characterization when an algebra of the form $End_C(M)$ is a higher Auslander-Solberg algbra, where $C$ is some algebra and $M$ a generator-cogenerator of $mod-C$, see \cite{CheKoe}.
Part 1 of the following lemma is mentioned in \cite{CX} without proof:
\begin{lemma}
Let $n$ be a positive natural number and $i \in \{1,2,...,n\}$.
Let $A$ be an algebra with dominant dimension $n \geq 1$ and minimal faithful projective-injective module $eA$. \newline
1. A module of the form $T=eA \oplus \Omega^{-i}(A)$ is a tilting module. \newline
2. For an indecomposable projective but noninjective module $e_jA$ the basic version of the module $\Omega^{-i}(e_jA)$ is indecomposable.
\end{lemma}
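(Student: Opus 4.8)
The plan is to verify the three tilting axioms for part~1 and then, for part~2, to show that passing to the $i$-th cosyzygy is fully faithful enough to transport the locality of $\End_A(e_jA)$ to $\End_A(\Omega^{-i}(e_jA))$. For part~1, write the start of the minimal injective resolution as $0 \to A \to I_0 \to \cdots \to I_{i-1} \to \Omega^{-i}(A) \to 0$. Since $i \le n = domdim(A)$, each $I_\ell$ with $\ell \le i-1$ is projective--injective, hence lies in $\add(eA) \subseteq \add(T)$; together with $\Omega^{-i}(A) \in \add(T)$ this exact sequence is exactly the coresolution required in axiom~3. Reading the same sequence from the right, $A, I_0, \dots, I_{i-1}$ are all projective, so it is a projective resolution of $\Omega^{-i}(A)$ of length $\le i$; thus $projdim(T) \le i$, giving axiom~1. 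For axiom~2 I split $\Ext^{\ge 1}(T,T)$ into blocks: $\Ext^{\ge 1}(eA, -) = 0$ as $eA$ is projective, and $\Ext^{\ge 1}(-, eA) = 0$ as $eA$ is injective, which disposes of every block involving $eA$. For the remaining block I use the cosyzygy sequences $0 \to \Omega^{-(\ell-1)}(A) \to I_{\ell-1} \to \Omega^{-\ell}(A) \to 0$ (all middle terms injective) to shift the second variable, obtaining $\Ext^{t}(\Omega^{-i}(A), \Omega^{-i}(A)) \cong \Ext^{t+i}(\Omega^{-i}(A), A)$ for all $t \ge 1$; the right-hand side vanishes because $projdim(\Omega^{-i}(A)) \le i < t+i$.

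For part~2 the target is to prove that $\Omega^{-i}(e_jA)$ is indecomposable (so a fortiori its basic version is). The engine is a dimension-shifting identity: for projective $M, N$ and $1 \le i \le n$ there is an isomorphism $\Ext^{i}_A(\Omega^{-i}(M), N) \cong \overline{\Hom}_A(M, N)$, where $\overline{\Hom}$ denotes homomorphisms modulo those factoring through an injective module. This comes from applying $\Hom_A(-, N)$ to the projective resolution of $\Omega^{-i}(M)$ found in part~1 and identifying the cokernel of $\Hom_A(I_0, N) \to \Hom_A(M, N)$ with $\overline{\Hom}_A(M,N)$, using that a map out of $M$ factors through some injective iff it extends along the injective hull $M \hookrightarrow I_0$. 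Shifting the second variable down along the cosyzygy sequences of $N$, and invoking the Ext-vanishing established exactly as in part~1 (namely $\Ext^{\ge 1}(\Omega^{-\ell}(M), \Omega^{-i}(N)) = 0$ for $\ell \le i$, since $projdim(\Omega^{-\ell}(M)) \le \ell \le i$), this matches $\Ext^i_A(\Omega^{-i}(M), N)$ with the quotient of $\Hom_A(\Omega^{-i}(M), \Omega^{-i}(N))$ by maps factoring through injectives. The upshot is a ring isomorphism $\overline{\End}_A(\Omega^{-i}(e_jA)) \cong \overline{\End}_A(e_jA)$.

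I then conclude as follows. Since $e_jA$ is indecomposable projective, $\End_A(e_jA) = e_jAe_j$ is local; as $e_jA$ is not injective the identity does not factor through an injective, so the ideal of such maps is proper and $\overline{\End}_A(e_jA)$ is a nonzero local ring. Transporting through the isomorphism, $\overline{\End}_A(\Omega^{-i}(e_jA))$ is local, hence has no nontrivial idempotents. Finally, because the injective resolution is minimal, $\Omega^{-i}(e_jA)$ has no nonzero injective direct summand; hence any nontrivial idempotent of $\End_A(\Omega^{-i}(e_jA))$ would be sent to $0$ or to $1$ modulo injective-factoring maps, forcing one of the corresponding summands to be a summand of an injective and therefore injective---a contradiction. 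Thus $\End_A(\Omega^{-i}(e_jA))$ has no nontrivial idempotents and $\Omega^{-i}(e_jA)$ is indecomposable, which is more than the stated assertion about its basic version.

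The main obstacle I expect is the bookkeeping around the phrase \emph{factoring through injectives}: the dimension-shifting naturally produces cokernels of maps through one specific injective ($I_0$ on one side, the last proj--injective term on the other), and turning these into the intrinsic ideal of all maps factoring through injective modules requires the injective-hull argument above, carried out carefully enough that the resulting identification is multiplicative, i.e.\ a genuine ring isomorphism rather than merely an isomorphism of $K$-vector spaces. The only external input is the standard fact that cosyzygies in a minimal injective resolution contain no injective summands, which is precisely what upgrades ``local modulo injectives'' to honest indecomposability.
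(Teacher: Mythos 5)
Your part~1 is correct and is essentially the paper's own argument: the truncated coresolution $0\to A\to I_0\to\cdots\to I_{i-1}\to\Omega^{-i}(A)\to 0$ gives axioms~1 and~3, and the $\Ext$-vanishing follows by shifting the second variable along the injective terms; your shortcut (the right-hand side dies because $\Omega^{-i}(A)$ has projective dimension at most $i$) is in fact slightly cleaner than the paper's detour through $\Omega^{-i}(A)=\Omega^{n-i}(\Omega^{-n}(A))$. For part~2, however, you diverge genuinely from the paper: there, part~2 is deduced from part~1 by pure counting --- the basic version of the tilting module $eA\oplus\Omega^{-i}(A)$ has exactly $s$ indecomposable summands ($s$ the number of simples), none of the summands of $\Omega^{-i}(A)$ being injective --- whereas you argue via stable endomorphism rings, which, if completed, would prove the stronger statement that $\Omega^{-i}(e_jA)$ itself is indecomposable.

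The gap sits exactly at the point you flag as the ``main obstacle,'' and it is not merely bookkeeping. Shifting the second variable along the coresolution of $N$ identifies $\Ext^i(\Omega^{-i}M,N)$ with $\Hom(\Omega^{-i}M,\Omega^{-i}N)$ modulo maps factoring through the fixed epimorphism $\pi\colon I^N_{i-1}\to\Omega^{-i}N$; since $I^N_{i-1}$ is projective--injective, this ideal is the ideal of maps factoring through a \emph{projective} module, i.e.\ the shift lands you in the projectively stable $\underline{\Hom}(\Omega^{-i}M,\Omega^{-i}N)$, not in $\overline{\Hom}$. Your injective-hull argument only reduces the ideal of injectively-factoring maps to maps factoring through $\iota\colon\Omega^{-i}M\hookrightarrow I^M_i$; to push those through $\pi$ you need $I^M_i$ to be projective, which holds iff $\operatorname{domdim}(M)\ge i+1$ --- fine for $i<n$, but failing precisely at $i=n$, the case the conjecture is about. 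The $\Ext$-vanishing $\Ext^{\ge1}(\Omega^{-\ell}M,\Omega^{-i}N)=0$ you invoke does not close this hole, nor does it give multiplicativity. Both defects are repairable: the maps factoring through $\pi$ form a two-sided ideal, and the resulting quotient is \emph{ring}-isomorphic to $\overline{\End}_A(e_jA)$ (multiplicativity comes for free from associativity of Yoneda products: the correspondence $f\cdot\epsilon_M=\epsilon_N\cdot g$, with $\epsilon_M,\epsilon_N$ the extension classes of the coresolutions, is automatically compatible with composition), hence local; since this quotient surjects onto $\overline{\End}_A(\Omega^{-i}(e_jA))$, which is nonzero because a cosyzygy in a minimal injective resolution has no injective summands, the latter is local as well and your final idempotent argument goes through. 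Alternatively, stay in the projectively stable quotient: locality there forces any nontrivial decomposition of $\Omega^{-i}(e_jA)$ to have a projective summand $e_kA$, and this is excluded by part~1, since rigidity of $T$ would give $0=\Ext^i(\Omega^{-i}(e_kA),e_kA)\cong\overline{\End}_A(e_kA)\ne0$, a contradiction. As written, though, the central claimed isomorphism $\overline{\End}_A(\Omega^{-i}(e_jA))\cong\overline{\End}_A(e_jA)$ is unproved in the one case that matters.
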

\begin{proof}
1. Conditions 1. and 3. in the definition of a tilting module are clear since the start of a minimal injective resolution of $A$ looks as follows: \newline
$0 \rightarrow A \rightarrow I_0 \rightarrow I_1 \rightarrow ... \rightarrow I_{i-1} \rightarrow \Omega^{-i}(A) \rightarrow 0$ and $I_j \in add(eA)$ since $A$ is assumed to have dominant dimension at least $n$. What is left to show is that $Ext^{k}(T,T)=0$ for every $k \geq 1$.
Now note that since $A$ has dominant dimension $n$, $\Omega^{-i}(A)=\Omega^{n-i}(\Omega^{-n}(A))$ and thus 
$Ext^{k}(eA \oplus \Omega^{-i}(A), eA \oplus \Omega^{-i}(A))$ (here we use that $eA$ is projective and injective)$ \newline =Ext^{k}(\Omega^{-i}(A),\Omega^{-i}(A)) = Ext^{i+k}(\Omega^{-i}(A),A)=Ext^{i+k}(\Omega^{n-i}(\Omega^{-n}(A)),A)= \newline Ext^{n+k}(\Omega^{-n}(A),A)=0$, since $\Omega^{-n}(A)$ has projective dimension $n$. \newline
2. This follows from 1. and the fact that $eA \oplus \Omega^{-i}(A)$ is a tilting module and thus its basic version has exactly $s$ simple modules, where $s$ is the number of simple $A$-modules. 
\end{proof}
Following \cite{CX}, we call tilting modules of the form $eA \oplus \Omega^{-i}(A)$ canonical tilting modules for $i \in \{1,2,...,domdim(A) \}$.
\begin{theorem}
Let $A$ be an algebra with finite dominant dimension $domdim(A) \geq 2$ and $Gordim(A)=domdim(A)$. Let $0 \rightarrow A \rightarrow I_0 \rightarrow I_1 \rightarrow ...$ be a minimal injective resolution of $A$, \newline then $B:=End_A(I_0 \oplus \Omega^{-n}(A))$ has dominant dimension $n$.
\end{theorem}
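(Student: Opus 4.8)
The plan is to exploit the hypothesis $Gordim(A)=domdim(A)=n$ to show that the canonical tilting module attached to $A$ is in fact an injective cogenerator, so that $B$ is, up to Morita equivalence and passage to the opposite algebra, just $A$ again. First I would replace $B=End_A(I_0\oplus\Omega^{-n}(A))$ by the Morita equivalent algebra $End_A(eA\oplus\Omega^{-n}(A))$, using the Morita lemma proved above together with $add(I_0)=add(eA)$, and set $T:=eA\oplus\Omega^{-n}(A)$. By the preceding Lemma, $T$ is a tilting module; in particular its basic version has exactly $s$ pairwise non-isomorphic indecomposable summands, where $s$ denotes the number of simple $A$-modules.

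The crux of the argument is the observation that \emph{every} indecomposable summand of $T$ is injective. The summand $eA$ is projective-injective because $domdim(A)\ge 1$, so all of its indecomposable summands are injective. For the other summand I would use the Gorenstein hypothesis: since $Gordim(A)=injdim(A_A)=n$, the minimal injective resolution of $A$ has the form $0\to A\to I_0\to\cdots\to I_{n-1}\to I_n\to 0$. Reading off the $n$-th cosyzygy, $\Omega^{-n}(A)$ is the image of the terminal differential $I_{n-1}\to I_n$; since the resolution stops at $I_n$ this map is surjective, whence $\Omega^{-n}(A)\cong I_n$ is injective. Thus every indecomposable summand of $T$ lies in $add(DA)$.

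From here the argument is a counting step. There are exactly $s$ indecomposable injective $A$-modules, and $T$ has $s$ pairwise non-isomorphic indecomposable summands, all of them injective; hence these summands must be precisely the indecomposable injectives, so $add(T)=add(DA)$. Consequently $B$ is Morita equivalent to $End_A(DA)$, the endomorphism algebra of the injective cogenerator, which under the $K$-duality $D$ is isomorphic to $A$ (or to $A^{op}$). I would then invoke the two facts already recorded in the preliminaries, namely that dominant dimension is invariant under Morita equivalence and that $domdim(A)=domdim(A^{op})$, to conclude $domdim(B)=domdim(A)=n$.

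I expect the only genuine content to be the middle paragraph: recognizing that the equality $Gordim(A)=n$ forces $\Omega^{-n}(A)$ to coincide with the terminal injective $I_n$ of the minimal injective resolution, and is therefore injective. Everything after that—the counting of indecomposable injective summands, the reduction through $add(T)=add(DA)$, and the identification of $End_A(DA)$ with $A$ up to opposite—is formal. It is worth noting that this is exactly where the counterexamples of the previous section fail: for $A_{3m+1}$ and $A_{3m+2}$ one has $Gordim(A)=gldim(A)>domdim(A)$, so $\Omega^{-n}(A)$ is a proper quotient of an injective rather than an injective, and the forcing of $add(T)=add(DA)$ breaks down.
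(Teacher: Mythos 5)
Your proof is correct, and while it ends at the same place as the paper's --- showing $\add(eA\oplus\Omega^{-n}(A))=\add(D(A))$, so that $B$ is Morita equivalent to $\End_A(D(A))\cong A$ and the Morita-invariance of dominant dimension finishes the argument --- it gets there by a genuinely different mechanism. The paper argues summand by summand: for each indecomposable projective noninjective module $e_iA$ it deduces from $\operatorname{domdim}(A)=\operatorname{injdim}(A)=n$ that $\operatorname{domdim}(e_iA)=\operatorname{injdim}(e_iA)=n$, so that the minimal injective coresolution of $e_iA$, read backwards, is a minimal projective resolution of the injective nonprojective module $\Omega^{-n}(e_iA)$; this exhibits $\Omega^{-n}$ as a bijection (with inverse $\Omega^{n}$) between indecomposable noninjective projectives and indecomposable nonprojective injectives, whence the equality of additive closures. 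You replace all of this by two observations: the global one that $\operatorname{injdim}(A)=Gordim(A)=n$ forces $\Omega^{-n}(A)$ to be the image of the surjection $I_{n-1}\to I_n$, i.e.\ $\Omega^{-n}(A)\cong I_n$ is injective; and the counting one that the basic version of the canonical tilting module $T=eA\oplus\Omega^{-n}(A)$ has exactly $s$ pairwise nonisomorphic indecomposable summands, all injective, so they must exhaust the $s$ indecomposable injectives. Your route is shorter and cleanly sidesteps the one point the paper treats briskly --- why every indecomposable nonprojective injective actually occurs as some $\Omega^{-n}(e_iA)$, i.e.\ the essential surjectivity implicit in the paper's claim that $\Omega^{-n}$ ``induces an equivalence'' --- at the price of invoking the tilting lemma and the standard count of summands of a tilting module (both of which the paper provides, so this is legitimate). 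What the paper's longer argument buys in exchange is the explicit correspondence $\Omega^{-n}$, $\Omega^{n}$ between noninjective projectives and nonprojective injectives, structural information your counting argument does not produce. Your closing remark about the counterexamples $A_{3m+1}$, $A_{3m+2}$ is an apt sanity check, though not needed for the proof.
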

\begin{proof}
Let $e_iA$, $i=1,...,l$ be the indecomposable projective noninjective modules.
Note that $\Omega^{-n}(A) \cong \bigoplus_{k=1}^{l}{\Omega^{-n}(e_iA)}$. Since $domdim(A)=injdim(A)$, every noninjective indecomposable projective module must have dominant dimension equal to its injective dimension: $domdim(e_iA)=injdim(e_iA)=n$.
Now we look at a minimal injective resolution of such a noninjective module $e_iA$: \newline
$0 \rightarrow e_iA \rightarrow I_0 \rightarrow I_1 \rightarrow ... \rightarrow I_{n-1} \rightarrow \Omega^{-n}(e_iA) \rightarrow 0$. Note that all $I_j$ are injective and projective and $\Omega^{-n}(e_iA)$ is injective but not projective, since $e_iA$ has dominant dimension and injective dimension equal to $n$. The above minimal injective resolution also is a minimal projective resolution of $\Omega^{-n}(e_iA)$ ending with the projective and noninjective module $e_iA$. Thus $\Omega^{-n}$ induces an equivalence between the subcategory of projective modules having no nonzero injective direct summand to the subcategory of injective modules having no nonzero projective direct summand with inverse $\Omega^{n}$.
Then $End_A(eA \oplus \Omega^{-n}(A))$ is Morita equivalent to $End_A(D(A)) \cong A$, since $add(eA \oplus \Omega^{-n}(A))=add(D(A))$. Thus $domdim(End_A(eA \oplus \Omega^{-n}(A)))$ has the same dominant dimension as $A$.
\end{proof}
Since the algebras $A_n$ with $n \equiv 0$ mod $n$ are higher Auslander algebras by \hyperref[domdimA]{ \ref*{domdimA}}, they have property * using the previous theorem.
The class of algebras having property * contains interesting classes of algebras like higher Auslander algebras and Morita algebras and it might be an interesting question whether one can give another equivalent condition to describe those algebras.
In the following we denote by $Dom_2$ the subcategory of $mod-A$ consisting of modules having dominant dimension at least two. By $Codom_2$ we denote the subcategory of $mod-A$ consisting of modules having codominant dimension at least two.
\begin{lemma}
\label{APT}
Let $A$ be a Morita algebra with minimal faithful injective-projective module $eA$ and let $F:=(-)e: mod-A \rightarrow mod-eAe$ be the functor induced by right multiplication with $e$.
Assume $X$ has the property that $domdim(X)+codomdim(X) \geq 1$, then $F$ induces an isomorphim between $Hom_A(X,X)$ and $Hom_{eAe}(Xe,Xe)$.
\end{lemma}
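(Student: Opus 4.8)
The plan is to study the natural algebra homomorphism
$$F_X\colon \End_A(X)\longrightarrow \End_{eAe}(Xe),\qquad f\mapsto fe,$$
and to prove injectivity and surjectivity separately, splitting the hypothesis $\operatorname{domdim}(X)+\operatorname{codomdim}(X)\ge 1$ into the two cases $\operatorname{codomdim}(X)\ge 1$ and $\operatorname{domdim}(X)\ge 1$. Because $A$ is a Morita algebra we have $D(Ae)\cong eA$, so $A^{op}$ is again Morita with minimal faithful module $Ae$, and I would use the $A\leftrightarrow A^{op}$, $X\leftrightarrow DX$ duality (which interchanges $\operatorname{domdim}$ and $\operatorname{codomdim}$) to deduce the second case from the first. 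The two tools I set up at the outset are: (i) the natural isomorphism $\Hom_A(eA,M)\cong Me$, valid for all $M$ since $eA$ is projective, which says that $\Hom$ out of $\add(eA)$ is faithfully computed by $(-)e$; and (ii) the unit $\Theta_M\colon M\to \Hom_{eAe}(Ae,Me)$ of the adjunction $(-)e\dashv \Hom_{eAe}(Ae,-)$, for which the classical theorem of Tachikawa and M\"uller gives that $\Theta_M$ is a monomorphism precisely when $\operatorname{domdim}(M)\ge 1$ and an isomorphism precisely when $\operatorname{domdim}(M)\ge 2$. I also note that $\Theta_M e$ is always an isomorphism, so $(\operatorname{cok}\Theta_M)e=0$ for every $M$.

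For injectivity I argue as follows. If $\operatorname{codomdim}(X)\ge 1$, choose an epimorphism $p\colon P\twoheadrightarrow X$ with $P\in\add(eA)$; for $f\in\End_A(X)$ with $fe=0$ the generators of $P$ land in $Xe$, which $f$ annihilates, so $fp=0$ and hence $f=0$ as $p$ is epic, the relevant point being tool (i). If instead $\operatorname{domdim}(X)\ge 1$, I use naturality of $\Theta$: for $f$ with $fe=0$ one has $\Theta_X\circ f=\Hom_{eAe}(Ae,fe)\circ\Theta_X=0$, and $\Theta_X$ monic forces $f=0$. Since the two dimensions sum to at least $1$, one of these cases always applies.

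The surjectivity is where the genuine work lies, and I expect it to be the main obstacle. The plan is to pass to the reflection $\hat X:=\Hom_{eAe}(Ae,Xe)$, which always has dominant dimension at least $2$ and satisfies $\hat Xe\cong Xe$ canonically via the structure map $\Theta_X\colon X\to\hat X$. Applying the already-established isomorphism in the case $\operatorname{domdim}\ge 2$ to $\hat X$ yields $\End_A(\hat X)\cong\End_{eAe}(\hat Xe)=\End_{eAe}(Xe)$, so any prescribed $\phi\in\End_{eAe}(Xe)$ lifts to some $\hat\phi\in\End_A(\hat X)$. It then remains to show that $\hat\phi$ preserves the submodule $\Theta_X(X)$, for once $\Theta_X$ is monic (i.e.\ $\operatorname{domdim}(X)\ge 1$) this lets $\hat\phi$ descend to the required $f\in\End_A(X)$ with $fe=\phi$. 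The obstruction is the composite $X\xrightarrow{\Theta_X}\hat X\xrightarrow{\hat\phi}\hat X\to \operatorname{cok}\Theta_X$, an element of $\Hom_A(X,\operatorname{cok}\Theta_X)$, and the crux is that $\operatorname{cok}\Theta_X$ is annihilated by $e$; hence whenever $X$ is generated by $\add(eA)$, that is $\operatorname{codomdim}(X)\ge 1$, tool (i) forces $\Hom_A(X,\operatorname{cok}\Theta_X)=0$ and the obstruction vanishes. The hardest point to pin down is precisely this vanishing of $\Hom_A(X,\operatorname{cok}\Theta_X)$ under the weakest instances of the hypothesis; the remaining configurations, in which the roles of $\operatorname{domdim}$ and $\operatorname{codomdim}$ are exchanged, I would then obtain by running the entire argument for $DX$ over $A^{op}$ and using $D(Ae)\cong eA$.
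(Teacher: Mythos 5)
Your injectivity argument is correct, and your surjectivity argument is correct in the cases it actually reaches; the problem is that those cases do not exhaust the stated hypothesis, and the leftover cases cannot be repaired. Concretely, your descent argument needs $\Theta_X$ monic, i.e.\ $\operatorname{domdim}(X)\geq 1$, \emph{and} it needs the obstruction in $\Hom_A(X,\operatorname{cok}\Theta_X)$ to vanish, which you get either from $\operatorname{codomdim}(X)\geq 1$ (via tool (i)) or from $\operatorname{cok}\Theta_X=0$, i.e.\ $\operatorname{domdim}(X)\geq 2$. So your argument, together with its $A^{op}$-dual, proves the isomorphism exactly on the set $\{\operatorname{domdim}+\operatorname{codomdim}\geq 2\}$: both dimensions $\geq 1$, or one of them $\geq 2$. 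The configurations with sum exactly $1$, say $\operatorname{codomdim}(X)\geq 1$ and $\operatorname{domdim}(X)=0$, are never reached, and the duality trick you propose cannot reach them: applying $D$ interchanges $\operatorname{domdim}$ and $\operatorname{codomdim}$, so it carries the configuration $(\operatorname{codomdim}\geq 1,\ \operatorname{domdim}=0)$ to $(\operatorname{domdim}\geq 1,\ \operatorname{codomdim}=0)$, which is precisely the other unsolved configuration, not a solved one.

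This gap is in fact unfillable, because the lemma with ``$\geq 1$'' is false. Let $A=\End_B(B\oplus K)$ be the Auslander algebra of $B=K[x]/(x^2)$; it is a Morita algebra, $eA$ is the unique indecomposable projective-injective, and $eAe\cong B$. Put $S=\top(eA)$ and let $I$ be the injective hull of the other simple module, and take $X=S\oplus I$. Both $S$ and $I$ have projective cover $eA$, so $\operatorname{codomdim}(X)\geq 1$, while $I$ is injective non-projective, so $\operatorname{domdim}(X)=0$; the hypothesis with sum $\geq 1$ holds. But $\Hom_A(S,I)=0$ since $\soc(I)\not\cong S$, whereas $Se$ and $Ie$ are both one-dimensional, hence both isomorphic to the simple $eAe$-module, so $\Hom_{eAe}(Se,Ie)\cong K$. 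Since $F$ respects the decomposition of $\End_A(S\oplus I)$ into the four Hom-blocks, the natural map $\End_A(X)\to\End_{eAe}(Xe)$ (which is injective, by your own argument) is not surjective: its source is $3$-dimensional and its target is $M_2(K)$. So the defect lies in the statement rather than in your strategy: the hypothesis should read $\operatorname{domdim}(X)+\operatorname{codomdim}(X)\geq 2$, which is what Lemma 3.1(2) of \cite{APT} actually delivers and also exactly what the paper verifies in its only application of this lemma (for $L=eA\oplus\Omega^{-i}(A)$ the sum is checked to be $\geq 2$). Under that corrected hypothesis your argument, assembled as above, is complete --- and it is a genuine self-contained proof, whereas the paper's own proof is nothing more than the citation to \cite{APT}.
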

\begin{proof}
This is a special case of Lemma 3.1. (2) in \cite{APT}. 
\end{proof}

\begin{proposition}
\label{HuXi}
Let $A$ be a selfinjective algebra and $M$ a nonprojective module.
Then for every $i \in \mathbb{Z}$ the algebras $End_A(A \oplus M)$ and $End_A(A \oplus \Omega^{i}(M))$ have the same dominant dimensions, finitistic dimensions and global dimensions.
Furthermore they also have the same Gorenstein dimension. 
\end{proposition}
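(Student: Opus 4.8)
The plan is to exhibit, for each fixed sign, a derived equivalence between $B:=\End_A(A\oplus M)$ and $\End_A(A\oplus \Omega^{\pm 1}(M))$ of a sufficiently restrictive type (an \emph{almost $\nu$-stable} derived equivalence in the sense of Hu and Xi), and then to quote that such equivalences preserve all four of the listed invariants. Since $A$ is selfinjective, the syzygy $\Omega$ is an autoequivalence of the stable category $\underline{\mod}\,A$ with inverse the cosyzygy $\Omega^{-1}$, and $\Omega$ sends indecomposable nonprojectives to indecomposable nonprojectives. Hence it is enough to treat the case $i=1$ for an arbitrary nonprojective $M$ (with, without loss of generality, no projective direct summands): iterating the case $i=1$ applied successively to $M,\Omega(M),\Omega^2(M),\dots$ and to $\Omega^{-1}(M),\Omega^{-2}(M),\dots$ yields the statement for every $i\in\mathbb{Z}$.

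For the case $i=1$, write $G:=A\oplus M$ and take a projective cover $0\to\Omega(M)\to P\xrightarrow{\pi}M\to 0$ with $P\in\add(A)$. Applying $\Hom_A(G,-)$ produces a morphism of projective $B$-modules $\Hom_A(G,\pi)\colon \Hom_A(G,P)\to\Hom_A(G,M)$, and I would assemble the two-term complex $T_M^\bullet=(\Hom_A(G,P)\to\Hom_A(G,M))$ in degrees $-1,0$ together with $T_A^\bullet=\Hom_A(G,A)$ in degree $0$. The claim is that $T^\bullet:=T_A^\bullet\oplus T_M^\bullet$ is a tilting complex over $B$ whose endomorphism algebra in $D^b(B)$ is isomorphic to $\End_A(A\oplus \Omega^{-1}(M))$; using injective envelopes in place of projective covers (these agree up to the Nakayama functor because $A$ is selfinjective) gives the companion equivalence with $\End_A(A\oplus\Omega(M))$. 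This is the standard Hu--Xi mutation construction. The decisive point is that the only term appearing in nonzero degrees is $\Hom_A(G,P)$ with $P\in\add(A)$, and since $A$ is a projective--injective generator--cogenerator of $\mod A$, the corresponding projective $B$-modules are precisely the projective--injective (equivalently $\nu$-stable) summands of $B$; this is what makes the derived equivalence \emph{almost $\nu$-stable}.

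Granting this, I would finish by invoking the invariance theorems of Hu and Xi \cite{HuXi}: an almost $\nu$-stable derived equivalence preserves the dominant dimension, the finitistic dimension, the global dimension and the Gorenstein dimension (the latter being, moreover, a derived invariant in any case, as both finiteness and the value of $\operatorname{injdim}A=\operatorname{projdim}DA$ for a Gorenstein algebra are). I expect the main obstacle to be the derived part: checking that $T^\bullet$ is genuinely self-orthogonal and generates $D^b(B)$ as a thick subcategory, and identifying $\End_{D^b(B)}(T^\bullet)$ on the nose with $\End_A(A\oplus\Omega^{\pm 1}(M))$; preservation of the \emph{value} of the global dimension is the part that genuinely needs the almost $\nu$-stable refinement, since arbitrary derived equivalences only preserve its finiteness. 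As an independent check for the dominant dimension, note that both $G=A\oplus M$ and $A\oplus\Omega(M)$ are generator--cogenerators of the selfinjective algebra $A$, so by M\"uller's characterisation (see \cite{Ta}) the dominant dimensions of their endomorphism algebras are governed by the vanishing ranges of the self-extensions $\Ext_A^{\ge 1}(M,M)$ and $\Ext_A^{\ge 1}(\Omega(M),\Omega(M))$; these ranges coincide because $\Ext_A^k(M,M)\cong\Ext_A^k(\Omega(M),\Omega(M))$ for all $k\ge 1$ over a selfinjective algebra, giving $\operatorname{domdim}B=\operatorname{domdim}\End_A(A\oplus\Omega(M))$ directly.
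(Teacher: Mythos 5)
Your route is essentially the paper's: the paper proves this proposition by simply citing Corollaries 1.2 and 1.3(2) of \cite{HuXi}, and what you propose is to reconstruct the proof of Corollary 1.2 there (the two-term tilting complex whose only term in nonzero degree lies in the additive closure of the projective-injective modules, i.e.\ an almost $\nu$-stable derived equivalence) and then to quote the same invariance results. Your reduction to $i=\pm 1$ and iteration is fine, since over a selfinjective algebra $\Omega^{\pm 1}$ of a nonprojective module is again nonprojective, and your M\"uller-type argument for the dominant dimension, based on $\Ext^k_A(M,M)\cong \Ext^k_A(\Omega(M),\Omega(M))$ for all $k\geq 1$ over a selfinjective algebra, is a correct and genuinely independent elementary verification of that one invariant.

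The genuine gap concerns the Gorenstein dimension. The invariance theorems of \cite{HuXi} that you invoke cover the dominant, finitistic and global dimensions but not the Gorenstein dimension; the paper states explicitly that this part ``can not be found in \cite{HuXi}'' and instead refers to \cite{CheKoe}, Proposition 3.11 (or to an argument with the tools of \cite{HuXi}). Your fallback justification --- that the value of the Gorenstein dimension is a derived invariant in any case --- is false: an arbitrary derived equivalence preserves only Gorensteinness, i.e.\ the finiteness of $\operatorname{injdim}(A)$, not its value. For instance, the hereditary path algebra of $1\rightarrow 2\rightarrow 3$ has Gorenstein dimension $1$, while the tilted algebra with the same quiver and the length-two zero relation is derived equivalent to it and has Gorenstein dimension equal to its global dimension, which is $2$. (Dominant dimension fails even more drastically to be a derived invariant --- that is the very point of this paper's counterexamples, where derived equivalent algebras have arbitrarily different dominant dimensions --- so the almost $\nu$-stable refinement is indispensable for all four invariants, not only for the global dimension as you suggest.) To repair the argument with the tools you already have, you can note: Gorensteinness is preserved by any derived equivalence; the finitistic dimension is preserved by your almost $\nu$-stable equivalence; and for a Gorenstein algebra the finitistic dimension equals the Gorenstein dimension, while if neither algebra is Gorenstein both Gorenstein dimensions are infinite.
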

\begin{proof}
This is a special case of corollary 1.2. and corollary 1.3. (2) in \cite{HuXi}.
The statement about the Gorenstein dimension can not be found in \cite{HuXi}, while it can be easily proven with the tools from this paper. Alternatively, it follows from the formula for the Gorenstein dimension in \cite{CheKoe}, Proposition 3.11. 
\end{proof}

In \cite{CX} it is proved that algebras of the form $B=End_A(eA \oplus \Omega^{-i}(A))$ have the same dominant dimension as $A$ in case $A$ is a Morita algebra and $i \in \{1,2,...,domdim(A) \}$.
Part 1 of the next proposition gives an alternative proof of this fact in case $A$ is a gendo-symmetric algebra.
\begin{proposition}
Let $A$ be a nonsymmetric, gendo-symmetric algebra with finite dominant dimension $n$ and minimal faithful projective-injective module $eA$. Let $i \in \{1,2,...,n\}$ and $B_i:=End_A(eA \oplus \Omega^{-i}(A))$. \newline
1.$B_i$ has the same dominant dimension as $A$. \newline
2.$B_i$ has the same finitistic dimension as $A$. \newline
3.$B_i$ has the same Gorenstein dimension as $A$. \newline
4.$B_i$ has the same global dimension as $A$.
\end{proposition}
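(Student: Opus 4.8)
The plan is to push the entire problem, via the Schur functor $F := (-)e : \mod\text{-}A \to \mod\text{-}eAe$, onto the symmetric algebra $B := eAe$, and then read off all four invariants from Propositions \ref{APT} and \ref{HuXi}. Recall that a gendo-symmetric algebra is in particular a Morita algebra (indeed $D(Ae) \cong eA$ already as right modules), so Lemma \ref{APT} applies to $A$; and that by definition $A \cong \End_B(Ae)$ with $Ae$ a generator of $\mod\text{-}B$. The point is that a single reduction handles all of $1.$–$4.$ simultaneously.

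First I would verify the hypothesis of Lemma \ref{APT} for the module $X := eA \oplus \Omega^{-i}(A)$. The summand $eA$ is projective-injective, so $domdim(eA) = codomdim(eA) = \infty$. For $1 \le i \le n$, the short exact sequence $0 \to \Omega^{-(i-1)}(A) \to I_{i-1} \to \Omega^{-i}(A) \to 0$ taken from the minimal injective resolution presents $\Omega^{-i}(A)$ as a quotient of $I_{i-1}$, which is projective-injective because $i-1 \le n-1$. Hence the projective cover of $\Omega^{-i}(A)$ is a summand of $I_{i-1}$ and is therefore injective, giving $codomdim(\Omega^{-i}(A)) \ge 1$. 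Thus $codomdim(X) \ge 1$, so $domdim(X) + codomdim(X) \ge 1$ even when $domdim(X) = 0$ (which occurs for $i = n$). Lemma \ref{APT} then yields the algebra isomorphism $B_i = \End_A(X) \cong \End_{eAe}(Xe) = \End_B\big(B \oplus \Omega^{-i}(A)e\big)$.

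Next I would identify the $B$-module $\Omega^{-i}(A)e$. Since $F$ is exact, applying it to $0 \to A \to I_0 \to I_1 \to \cdots$ yields an exact sequence $0 \to Ae \to I_0 e \to I_1 e \to \cdots$. For $j \le n-1$ we have $I_j \in \add(eA)$, so $I_j e \in \add(eAe) = \add(B)$ is projective, hence injective because $B$ is symmetric; therefore the truncation in degrees $\le i$ is an injective $B$-resolution of $Ae$, whence $\Omega^{-i}(A)e \cong \Omega_B^{-i}(Ae)$ up to projective-injective summands. As $B$ is a direct summand of $Xe$, these extra summands lie in $\add(B) \subseteq \add(Xe)$ and do not alter the endomorphism algebra, so $B_i \cong \End_B\big(B \oplus \Omega_B^{-i}(Ae)\big)$.

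Finally I would invoke Proposition \ref{HuXi} for the selfinjective algebra $B$ and (the non-projective part of) $Ae$; note that $Ae$ is non-projective over $B$, since otherwise $A \cong \End_B(Ae)$ would be Morita equivalent to the selfinjective algebra $B$ and hence have infinite dominant dimension, contradicting the finiteness of $n$. With index $-i$, Proposition \ref{HuXi} shows that $\End_B(B \oplus \Omega_B^{-i}(Ae))$ and $\End_B(B \oplus Ae)$ share their dominant, finitistic, Gorenstein, and global dimensions; and $\End_B(B \oplus Ae)$ is Morita equivalent to $\End_B(Ae) \cong A$ because $Ae$ is a generator. Chaining these identifications proves the four assertions at once. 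I expect the only genuinely new computation to be the codominant-dimension bound that unlocks Lemma \ref{APT}; the main thing to watch is the bookkeeping of projective-injective summands when transferring cosyzygies from $A$ to $B$, so that no summand is silently gained or lost in the endomorphism algebras.
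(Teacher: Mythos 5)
Your proposal is correct and takes essentially the same route as the paper: verify the hypothesis of Lemma \ref{APT} for $X = eA \oplus \Omega^{-i}(A)$, use it to identify $B_i$ with $\End_{eAe}(eAe \oplus \Omega^{-i}(Ae))$, apply Proposition \ref{HuXi} over the symmetric algebra $eAe$ with the nonprojective generator $Ae$, and conclude via the Morita equivalence of $\End_{eAe}(eAe \oplus Ae)$ with $\End_{eAe}(Ae) \cong A$. The only (harmless) differences are that you obtain the hypothesis of Lemma \ref{APT} from a projective-cover argument giving codominant dimension at least $1$, whereas the paper gets the sum of the two dimensions to be at least $2$ from $\Omega^{-i}(A) \cong \Omega^{n-i}(\Omega^{-n}(A))$ and $n \geq 2$, and that you are slightly more explicit about possible stray projective-injective summands when transferring cosyzygies and about why $Ae$ is nonprojective.
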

\begin{proof}
Using $\Omega^{-i}(A) \cong \Omega^{n-i}(\Omega^{-n}(A))$ and the fact that gendo-symmetric algebras have dominant dimension at least two, we see that any module of the form $L=eA \oplus \Omega^{-i}(A)$ has the property that $domdim(L)+codomdim(L) \geq 2$.
When speaking about homological dimensions in the following, we always mean the dominant,global,Gorenstein or finitistic dimension.
Using \hyperref[APT]{ \ref*{APT}}, we see that $End_A(eA \oplus \Omega^{-i}(A)) \cong End_{eAe}(eAe \oplus \Omega^{-i}(A)e)$. Since the functor $F=(-)e$ is exact, one has $\Omega^{-i}(A)e \cong \Omega^{-i}(Ae)$ because $i \leq n$ and $(-)e$ sends projective-injective $A$-modules to injective $eAe$-modules. Using \hyperref[HuXi]{\ref*{HuXi}}, we see that $End_A(eA \oplus \Omega^{-i}(A)) \cong End_{eAe}(eAe \oplus \Omega^{-i}(A)e) \cong End_{eAe}(eAe \oplus \Omega^{-i}(Ae))$ and $End_{eAe}(eAe \oplus Ae)$ have the same homological dimensions. Now we use that $A$ is a gendo-symmetric algebra and thus $A \cong End_{eAe}(Ae)$. Note that $End_{eAe}(Ae)$ is Morita-equivalent to $End_{eAe}(eAe \oplus Ae)$, since $add(eAe) \subseteq add(eAe \oplus Ae)$ because $Ae \cong (1-e)Ae \oplus eAe$ (as $eAe$-modules) is a generator of $mod-eAe$. Thus since the relevant homological dimensions are invariant under Morita-equivalence, also $End_A(eA \oplus \Omega^{-i}(A))$ has the same homological dimensions as $A$. 
\end{proof}

The previous proposition can be used to get new higher Auslander-Solberg algebras from oldes ones using canonical tilting modules:
\begin{example}
Let $n \geq 1$ and $d \geq 1$.
Let $A$ be the Nakayama algebra with Kupisch series $[2d,2d+1]$ and minimal faithful projective-injective module $eA$.
It can be easily seen that $A \cong End_C(C \oplus C/soc(C))$, where $C:=K[x]/(x^{d+1})$ is a symmetric Nakayama algebra with one simple module.
It is easy to check that $A$ has dominant dimension and Gorenstein dimension equal to $2$. Furthermore $A$ has finite global dimension iff $d=1$. Thus in general $A$ is a higher Auslander-Solberg algebra and a higher Auslander algebra iff $d=1$. Now let $B:=End_A(eA \oplus \Omega^{-1}(A)).$ By the above proposition, this algebra is again a higher Auslander-Solberg algebra with the same dominant and Gorenstein dimension as $A$. For $d=1$, $B$ is isomorphic to $A$.
The quiver with relations $I$ of the algebra $B:=End_A(eA \oplus \Omega^{-1}(A))$ looks as follows for $d \geq 2$:
$$
\begin{xy}
  \xymatrix{
      & {\bullet}^1\ar@(ul,dl)_{\alpha_3} \ar@/ ^1pc/[r]^{\alpha_1}   & {\bullet}^2 \ar@/ ^1pc/[l]^{\alpha_2}}
\end{xy} \newline 
\\\ I=<\alpha_2 \alpha_1, \alpha_3 \alpha_1, \alpha_2 \alpha_3, \alpha_1 \alpha_2 - \alpha_3^d>.
$$ 
\end{example}
The next example shows that the previous proposition is not true for general algebras with dominant dimension at least two.
\begin{example}
The following example shows that the previous proposition is wrong in case the algebra is not assumed to be a gendo-symmetric algebra:
Let $A$ be the Nakayama algebra with Kupisch series $[3,3,3,3,2,1]$. We saw in \hyperref[domdimA]{ \ref*{domdimA}}, that $A$ has dominant and global dimension equal to 3 and thus is a higher Auslander algebra. The algebra $B:=End_A(eA \oplus \Omega^{-1}(A))$ is isomorphic to the following algebra with quiver and relations: \newline
$\xymatrix@1{  & \circ^2 \ar[r]^{\alpha_1} & \circ^1 \ar[r]^{\alpha_4} & \circ^3 & & &  &  \\ \circ^{6}\ar[r]^{\alpha_6} & \circ^{5}\ar[r]^{\alpha5} \ar[u]^{\alpha_3} & \circ^4 \ar[u]^{\alpha_2} & &  & & } \\\ I=<\alpha_3 \alpha_1-\alpha_5 \alpha_2, \alpha_6 \alpha_3>$. 
It can easily be checked that $B$ has dominant dimension 1 and global dimension 2 and thus is not a higher Auslander algebra.
\end{example}


\begin{thebibliography}{Gus}
\bibitem[Abr]{Abr} Abrar, Muhammad : {\it Dominant dimensions of finite dimensional algebras.}\newline\noindent \url{http://elib.uni-stuttgart.de/opus/volltexte/2012/7548/pdf/Abrar.pdf},\newline
\noindent PhD thesis, Stuttgart. 
\bibitem[AnFul]{AnFul} Anderson, Frank W.; Fuller, Kent R.: {\it Rings and Categories of Modules.}\newline\noindent Graduate Texts in Mathematics, Volume 13, Springer-Verlag, 1992. 
\bibitem[ASS]{ASS} Assem, Ibrahim; Simson, Daniel; Skowronski, Andrzej: {\it Elements of the Representation Theory of Associative Algebras, Volume 1: Representation-Infinite Tilted Algebras.}\newline
\noindent London Mathematical Society Student Texts, Volume 72, 2007. 
\bibitem[APT]{APT} Auslander, Maurice; Platzeck, Maria Ines; Todorov, Gordana: {\it Homological theory of idempotent Ideals} Transactions of the American Mathematical Society, Volume 332, Number 2 , August 1992.
\bibitem[AS]{AS} Auslander, Maurice; Solberg, Oyvind : {\it Gorenstein algebras and algebras with dominant dimension at least 2.} Comm. Algebra 21 (1993), no. 11, 3897-3934.
\bibitem[CheKoe]{CheKoe} Chen, Hongxing; Koenig, Steffen: {\it Ortho-symmetric modules, Gorenstein algebras and derived equivalences.} electronically published. doi:10.1093/imrn/rnv368.
\bibitem[CX]{CX} Chen, Hongxing; Xi,Changchang: {\it Dominant dimensions, derived equivalences and tilting modules.} Israel Journal of Mathematics TBD (2016), 1-44 DOI: 10.1007/s11856-016-1327-4
\bibitem[FanKoe]{FanKoe} Fang, Ming; Koenig, Steffen: {\it Gendo-symmetric algebras, canonical comultiplication, bar cocomplex and dominant dimension.} Trans. Amer. Math. Soc. 368 (2016), no. 7, 5037-5055. 
\bibitem[Ful]{Ful} Fuller, Kent R.: {\it Generalized Uniserial Rings and their Kupisch Series.} Math. Zeitschr. Volume 106, pages 248-260, 1968.
\bibitem[HuXi]{HuXi} Hu, Wei; Xi, Changchang: {\it Derived equivalences and stable equivalences of Morita type, I.} Nagoya Math. J. 200 (2010), 107-152.
\bibitem[Iya]{Iya} Iyama, Osamu: {\it Auslander correspondence.} Adv. Math. 210 (2007), no. 1, 51-82. 
\bibitem[KerYam]{KerYam} Kerner, Otto; Yamagata, Kunio: {\it Morita algebras.}\newline
\noindent Journal of Algebra, Volume 382, pages 185-202, 2013.
\bibitem[Rei]{Rei} Reiten, Idun: {\it Tilting theory and homologically finite subcategories with applications to quasihereditary algebras.} Handbook of tilting theory, 179-214, London Math. Soc. Lecture Note Ser., 332, Cambridge Univ. Press, Cambridge, 2007.
\bibitem[SkoYam]{SkoYam} Skowronski, Andrzej; Yamagata, Kunio: {\it Frobenius Algebras I:\newline\noindent Basic Representation Theory.} EMS Textbooks in Mathematics, 2011.
\bibitem[Ta]{Ta} Tachikawa, Hiroyuki: {\it Quasi-Frobenius Rings and Generalizations: QF-3 and QF-1 Rings (Lecture Notes in Mathematics 351) } Springer; 1973.
\end{thebibliography}
\end{document}